\documentclass[11pt]{article}

\usepackage[T1]{fontenc}
\usepackage{amssymb}
\usepackage{amsthm}
\usepackage{mathtools}
\usepackage{enumitem}
\usepackage{aliascnt}
\usepackage{booktabs}
\usepackage{placeins}
\usepackage{float}
\usepackage[hidelinks]{hyperref}
\numberwithin{equation}{section}
\newtheorem{theorem}{Theorem}[section]

\newaliascnt{proposition}{theorem}
\newtheorem{proposition}[proposition]{Proposition}
\aliascntresetthe{proposition}

\newaliascnt{lemma}{theorem}
\newtheorem{lemma}[lemma]{Lemma}
\aliascntresetthe{lemma}

\newaliascnt{corollary}{theorem}
\newtheorem{corollary}[corollary]{Corollary}
\aliascntresetthe{corollary}

\newaliascnt{conjecture}{theorem}

\aliascntresetthe{conjecture}

\newaliascnt{problem}{theorem}
\newtheorem{problem}[problem]{Problem}
\aliascntresetthe{problem}

\theoremstyle{definition}
\newaliascnt{definition}{theorem}
\newtheorem{definition}[definition]{Definition}
\aliascntresetthe{definition}

\newaliascnt{claim}{theorem}
\newtheorem{claim}[claim]{Claim}
\aliascntresetthe{claim}

\theoremstyle{remark}
\newaliascnt{remark}{theorem}

\aliascntresetthe{remark}

\theoremstyle{plain}

\usepackage[noabbrev,capitalize]{cleveref}

\crefname{theorem}{Theorem}{Theorems}
\crefname{proposition}{Proposition}{Propositions}
\crefname{lemma}{Lemma}{Lemmas}
\crefname{corollary}{Corollary}{Corollaries}
\crefname{conjecture}{Conjecture}{Conjectures}
\crefname{claim}{Claim}{Claims}
\crefname{definition}{Definition}{Definitions}
\crefname{remark}{Remark}{Remarks}
\crefname{problem}{Problem}{Problems}

\newcommand{\Z}{\mathbb{Z}}

\title{On a problem concerning $\Z_3$-Ramsey numbers}
\author{Cheng Chi$^\ast$
\and Jialin He$^\dagger$}
\date{}

\begin{document}

\maketitle

\begingroup
\renewcommand{\thefootnote}{\fnsymbol{footnote}}
\footnotetext[1]{School of Mathematical Sciences, Shanghai Jiao Tong University, 800 Dongchuan Road, Shanghai 200240, China.
  Email: chengchi@sjtu.edu.cn.
  Supported by the National Key R\&D Program of China under Grant No.~2022YFA1006400 and by the National Natural Science Foundation of China under Grant No.~12571376.}
\footnotetext[2]{School of Mathematical Sciences, Key Laboratory of MEA (Ministry of Education), Shanghai Key Laboratory of PMMP, and Nantong Institute for Applied Mathematics and Artificial Intelligence, East China Normal University, Shanghai 200241, China.
  Email: jlhe@math.ecnu.edu.cn.
  Supported in part by the Science and Technology Commission of Shanghai Municipality under Grant No.~22DZ2229014.}
\endgroup

\begin{abstract}
  For a graph $G$ with $3\mid e(G)$, let $R(G,\Z_3)$ denote the least integer $N$ such that every edge weighting $w:E(K_N)\to\Z_3$ admits a copy of $G$ whose edge weights sum to zero.
  We prove that every $n$-vertex graph $G$ with $n\ge6$ and $3\mid e(G)$ satisfies $R(G,\Z_3)\le n+5$.
  Moreover, the stronger bound $R(G,\Z_3)\le n+4$ holds whenever $G$ contains an induced $P_4$ or an induced $2K_2$.
  Consequently, we resolve a problem posed by Caro and Mifsud: every graph $G$ with $3\mid e(G)$ satisfies $R(G,\Z_3)\le |V(G)|+8$, with equality if and only if $G\cong K_3$.
\end{abstract}

\section{Introduction}

Zero-sum Ramsey theory, introduced by Bialostocki and Dierker \cite{BialostockiDierker1990theorems,BialostockiDierker1990}, is an algebraic variant of classical Ramsey theory motivated by the Erd\H{o}s--Ginzburg--Ziv theorem; see also Caro's survey~\cite{Caro1996}.
In classical Ramsey theory, edge labels are regarded merely as colors, and one seeks a copy of a prescribed graph whose edges all have the same label.
In the zero-sum setting, the labels are elements of a finite abelian group $\Gamma$, and monochromaticity is replaced by an algebraic balance condition.
More precisely, for a graph $G$ satisfying $|\Gamma|\mid e(G)$, the zero-sum Ramsey number $R(G,\Gamma)$ is the least integer $N$ such that every weighting $w:E(K_N)\to\Gamma$ contains a copy $G'$ of $G$ for which
\[
  \sum_{e\in E(G')}w(e)=0 \quad\text{in }\Gamma.
\]
Every monochromatic copy of $G$ is zero-sum under this divisibility condition, so the ordinary multicolor Ramsey theorem guarantees that $R(G,\Gamma)$ is finite.

The search for strong general upper bounds is also guided by a familiar principle from ordinary two-color Ramsey theory: if $|V(G)|=n$, then $r(G,G)\le r(K_n,K_n)$, where $r$ denotes the ordinary two-color Ramsey number.
This makes complete graphs a natural first benchmark.
An early systematic formulation of the corresponding linear---in fact, additive---program in zero-sum Ramsey theory is due to Caro~\cite{Caro1994}.
He proved that, for every fixed $k$ and $r$, there is a constant $c(k,r)$ such that
\[
  R(K_n^{(r)},\Gamma)\le n+c(k,r)
\]
for every group $\Gamma$ of order $k$ whenever $k\mid\binom{n}{r}$, where $K_n^{(r)}$ denotes the complete $r$-uniform hypergraph.
The analogy with ordinary Ramsey theory has an important limitation, since zero-sum Ramsey numbers are not monotone under taking subgraphs.
In particular, a zero-sum copy of $K_n$ need not contain a zero-sum copy of an $n$-vertex graph $G$.
Hence results for complete graphs provide the natural first step, but not an automatic reduction of the general problem.

Caro~\cite{CaroPersonalCommunication2026} suggested the following broader additive question: for each finite abelian group $\Gamma$, does there exist a constant $f(\Gamma)$ such that $R(G,\Gamma)\le |V(G)|+f(\Gamma)$ for every graph $G$ satisfying $|\Gamma|\mid e(G)$?
A parallel question can be asked for $r$-uniform hypergraphs, with the constant also allowed to depend on $r$.
The case $\Gamma=\Z_2$ provides the first compelling evidence for such an additive principle.
Alon and Caro~\cite{AlonCaro1993} proved that every $n$-vertex graph $G$ with an even number of edges satisfies $R(G,\Z_2)\le n+2$, and Caro later determined $R(G,\Z_2)$ exactly for all such graphs~\cite{Caro1994binary}.
Thus $\Z_3$ is the next fundamental case.
The complete-graph case over $\Z_3$ was developed through a sequence of exact results.
Work of Bialostocki and Dierker, Harborth and Piepmeyer, and Caro determined $R(K_n,\Z_3)$ for every $n$ satisfying $3\mid\binom n2$, except when $n\equiv7\pmod9$ and $n\ge16$~\cite{BialostockiDierker1990,HarborthPiepmeyer1994,HarborthPiepmeyer1996,Caro1997}.
Most recently, Chi, He, and Ma~\cite{ChiHeMaCompleteGraphs2026} proved that $R(K_n,\Z_3)=n+3$ for every $n\ge10$ with $n\equiv1\pmod3$, thereby completing the determination for complete graphs over $\Z_3$.

Beyond complete graphs, exact or sharp additive results are known for several natural graph classes.
Caro and Mifsud~\cite{CaroMifsud2025} proved that every $n$-vertex forest $F$ with $3\mid e(F)$ satisfies $R(F,\Z_3)\le n+2$ and determined exact values for several infinite families of trees.
Alvarado, Colucci, and Parente~\cite{AlvaradoColucciParente2025} subsequently determined $R(F,\Z_3)$ for every forest $F$ without isolated vertices; the value is always one of $n$, $n+1$, and $n+2$.
Chi and He obtained exact results over $\Z_3$ for cycles, wheels, and complete bipartite graphs~\cite{ChiHeCyclesWheels,ChiHeCompleteBipartite2026}.
Costa~\cite{CostaFriendship2026} determined exact zero-sum Ramsey numbers for friendship graphs, books, and triangular windmills.
At a broader level, linear bounds are known for bounded-degree graphs over arbitrary finite abelian groups~\cite{KatzLianMalekshahianShapiro2025}, while additive bounds over prime cyclic groups have been obtained for suitable degenerate graphs~\cite{Shapiro2026} and graphs admitting a sufficiently large $2$-packing~\cite{HeathSimmons2026}.
Taken together, these results provide substantial evidence for a uniform additive bound over $\Z_3$, but they all impose structural restrictions on the target graph and therefore leave the general case open.
The known value $R(K_3,\Z_3)=11$~\cite{ChungGraham1983} shows that the additive constant in any universal bound cannot be smaller than eight.
It is therefore natural to ask whether eight always suffices and whether equality can only occur for $K_3$.
Caro and Mifsud posed the following problem.
\begin{problem}[Caro and Mifsud \cite{CaroMifsud2025}]
Let $G$ be an $n$-vertex graph with $3\mid e(G)$.
Is it true that
\[
  R(G,\Z_3)\le n+8,
\]
with equality if and only if $G\cong K_3$?
\end{problem}

We answer this problem affirmatively.
More strongly, we prove that every graph $G$ with at least six vertices and $3\mid e(G)$ satisfies $R(G,\Z_3)\le |V(G)|+5$.

\begin{theorem}
  \label{thm:main}
  Let $n\ge6$, and let $G$ be an $n$-vertex graph with $3\mid e(G)$.
  Then
  \[
    R(G,\Z_3)\le
    \begin{cases}
      n+4, & \text{if $G$ contains an induced $P_4$ or an induced $2K_2$}, \\
      n+5, & \text{otherwise.}
    \end{cases}
  \]
\end{theorem}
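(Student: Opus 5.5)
We argue by contradiction: fix $N=n+4$ (respectively $n+5$) and a weighting $w:E(K_N)\to\Z_3$ with no zero-sum copy of $G$, and extract rigid structure from $w$ through local switching moves. The basic tool is a \emph{swap}. Fix an embedding $\phi:V(G)\to V(K_N)$ and an unused vertex $z$. Replacing $\phi(v)$ by $z$ changes the weight of the copy by
\[
  \sum_{u\in N_G(v)}\bigl(w(z\phi(u))-w(\phi(v)\phi(u))\bigr),
\]
and transposing the images of two vertices $u,v$ changes it by a similar sum over their symmetric neighbourhood. Since every copy has nonzero weight, no single move can shift the weight by exactly the negative of the current value. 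Because $N-n\ge4$ leaves at least four spare vertices, we can combine moves through several intermediate copies and force linear relations among the differences $w(xa)-w(ya)$ over many triples. The goal of this first stage is a structural lemma: either $w$ is constant on all edges, contradicting $3\mid e(G)$, or $w$ is close to a ``vertex-potential'' weighting $w(xy)=f(x)+f(y)$ (or $f(x)+f(y)+c$) up to a few exceptional edges.

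For a weighting of the form $w(xy)=f(x)+f(y)+c$, the weight of a copy is $\sum_v \deg_G(v)f(\phi(v))+c\,e(G)=\sum_v\deg_G(v)f(\phi(v))$ in $\Z_3$. Killing this sum is a zero-sum problem for the sequence of degrees mod $3$ placed against a multiset of $N$ values of $f$. We would settle it with a pigeonhole / Erd\H{o}s--Ginzburg--Ziv style argument. Choose which $n$ of the $N$ vertices to use and how to assign them, exploiting two facts: among $n+4$ values some class of $f$ is large, and the degree multiset of $G$ is not too degenerate once $n\ge6$. Vertices whose degree is divisible by $3$ contribute nothing, so the difficult configurations are those in which all degrees lie in a single nonzero class. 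We would treat these separately using $3\mid e(G)$, which gives $\sum\deg=2e(G)\equiv0$.

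The distinction between $n+4$ and $n+5$ enters as follows. An induced $P_4$ $abcd$ or an induced $2K_2$ $ab,cd$ supports a double swap. Exchanging the images of $b$ and $c$ (respectively, of the two edges) changes the weight by a combination of only four edge differences with no interference from common neighbours. This rules out the near-potential weightings with one fewer spare vertex. If $G$ has neither, then $G$ is a $P_4$-free, $2K_2$-free graph (a cograph with a restricted structure, essentially a threshold-like graph). We would analyse such $G$ directly: either they are nearly complete or nearly empty, and we would reduce to known values for $K_n$ (e.g.\ $R(K_n,\Z_3)\le n+3$ where applicable) or to star-like structure with one extra spare vertex.

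We expect the main obstacle to be the first structural lemma. The difficulty is making the swap relations rigid enough, with only four or five spare vertices and for \emph{arbitrary} $G$, to force $w$ into potential form. I would first try the case where $G$ has a vertex $v$ with $\deg_G(v)\not\equiv0$ whose neighbourhood is also not empty mod $3$, using swaps of $v$ alone with the spare vertices to pin down $w(z\,\cdot)-w(\phi(v)\,\cdot)$. I would then bootstrap over all vertices, handling the degenerate cases where every degree is $0\pmod 3$ by hand.
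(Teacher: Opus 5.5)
\textbf{Verdict: gaps in both cases.} Your endgame matches the paper: once $w(uv)=\phi_u+\phi_v+c$ on $n+2$ host vertices, an Erd\H{o}s--Ginzburg--Ziv assignment of the degree classes makes $\sum_v\deg_G(v)\phi_{\sigma(v)}$ vanish. But the step that produces this affine set is missing, and the mechanism you sketch would not supply it.

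The paper measures non-affineness by rectangles $\Delta_w(a,b;u,v)=w(au)+w(bv)-w(av)-w(bu)$. It uses two facts about them:
\begin{itemize}
\item if all rectangles vanish on a set of at least four vertices, the weighting is affine there;
\item if no two vertex-disjoint rectangles are nonzero, an affine core of size at least six extends to all but at most two host vertices, giving $n+2$ affine vertices inside $K_{n+4}$.
\end{itemize}
The graph is used only to exclude two disjoint nonzero rectangles $(a_i,b_i;u_i,v_i)$. An induced $P_4$ or $2K_2$ gives nonadjacent $x,y$ with private neighbours $x_0,y_0$. Send $x\mapsto a_1$, $y\mapsto a_2$, $x_0\mapsto t_1\in\{u_1,v_1\}$ and $y_0\mapsto t_2\in\{u_2,v_2\}$. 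The two choices of $t_i$ change the increment of the move $a_i\to b_i$ by the rectangle value, so both increments $\delta_1,\delta_2$ can be made nonzero. Since $xy\notin E(G)$ the two moves are independent, and $\{0,\delta_1,\delta_2,\delta_1+\delta_2\}=\Z_3$.

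Your move, exchanging the images of the adjacent middle vertices $b,c$ of a $P_4$, does not achieve this. It changes the edges to every vertex of $N_G(b)$ or $N_G(c)$ outside their common part, not just four edges. It also produces no pair of independent, controllable nonzero increments. Moreover, near-affine weightings are the favourable case; the special configuration is needed to exclude the far-from-affine weightings, not the near-potential ones.

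Your second case rests on a false structural claim. Graphs with no induced $P_4$ or $2K_2$ are exactly the complements of trivially perfect graphs. They include every complete multipartite graph, such as $K_{3,3}$ and $K_{2,2,2}$, so they are not nearly complete, not nearly empty, and not threshold-like. Since $R(\cdot,\Z_3)$ is not monotone under taking subgraphs, values of $R(K_n,\Z_3)$ also do not transfer to them.

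The bound $n+5$ is attained in this class by $K_3\cup 3K_1$. Any zero-sum copy contains a zero-sum triangle, so $R\ge R(K_3,\Z_3)=11=n+5$. A weighting of $K_{10}$ with no zero-sum triangle is affine on no five vertices, by EGZ applied to the labels. So at $n+4$ there are counterexamples far from affine, and reaching $n+5$ needs input at least as strong as $R(K_3,\Z_3)\le 11$, which your outline does not provide.

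The paper gets this input as follows:
\begin{itemize}
\item It deletes false-twin triples and true-twin sextuples, lifting zero-sum copies back via EGZ and $R(K_6,\Z_3)=10$.
\item It reruns the two-switch argument whenever an (SVA) configuration survives.
\item Otherwise, Golumbic's rooted-forest representation shows the reduced graph is empty, $K_1$, $2K_1$, or, up to at most two isolated vertices, one of $K_3$, $K_4$, $K_5-e$, $K_{2,2,2}$, or $A+(tK_1\cup F)$ with $F\in\{P_3,C_4,K_s\}$.
\item These are bounded individually using $R(K_3,\Z_3)=11$, $R(K_4,\Z_3)=7$, a rigidity lemma for the two-level family, and separate arguments for $K_5-e$ and $K_{2,2,2}$.
\end{itemize}
None of this is anticipated in your proposal, so that branch is not yet a proof strategy.
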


The proof also uses the exact values
\[
  R(K_4,\Z_3)=7
  \quad\text{and}\quad
  R(K_6,\Z_3)=10,
\]
both due to Harborth and Piepmeyer~\cite{HarborthPiepmeyer1994}.
The remaining cases with $|V(G)|\le5$ are handled in \cref{sec:proof-cor-main}.
Together with the known value $R(K_3,\Z_3)=11$~\cite{ChungGraham1983}, this yields the following corollary.

\begin{corollary}\label{cor:main}
  Every graph $G$ with $3\mid e(G)$ satisfies $R(G,\Z_3)\le |V(G)|+8$, with equality if and only if $G\cong K_3$.
\end{corollary}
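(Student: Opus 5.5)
The statement to prove is \cref{cor:main}. I would derive it from \cref{thm:main} together with a finite case analysis for graphs on at most five vertices.

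\textbf{Large graphs.} For $n\ge6$, \cref{thm:main} gives $R(G,\Z_3)\le n+5<n+8$, so both the bound and strict inequality hold.

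\textbf{Graphs on at most five vertices, reduction.} Since $3\mid e(G)$ and $G$ is nonempty, $e(G)\in\{3,6,9\}$ and $n\in\{3,4,5\}$. Isolated vertices need a little care, because adding isolated vertices to $G$ changes $n$ while preserving zero-sum copies. If $G=H\cup tK_1$, then $R(G,\Z_3)\le\max\{R(H,\Z_3),\,|V(G)|\}$. It therefore suffices to bound graphs without isolated vertices, plus the special cases $K_3\cup tK_1$ with $t=1,2$. For those two cases, $R(K_3,\Z_3)=11$ gives $R\le 11<n+8$ as soon as $n\ge4$.

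\textbf{Graphs on at most five vertices, the list.} The relevant graphs are:
\begin{itemize}[nosep]
\item $n=3$: only $K_3$, for which $R=11=n+8$ gives the equality case.
\item $n=4$: $K_{1,3}$, $P_4$, $K_4$ (with $R(K_4,\Z_3)=7$), and $K_3\cup K_1$.
\item $n=5$: all graphs with $3$, $6$ or $9$ edges and no isolated vertices, e.g.\ $P_3\cup K_2$, $K_{1,3}\cup K_1$, $C_5$ plus a chord, $K_5$ minus an edge.
\end{itemize}
The forests among these are covered by Caro--Mifsud's bound $R(F,\Z_3)\le n+2$. Cycles, wheels and complete bipartite graphs are covered by the cited Chi--He results. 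That leaves a handful of graphs with $6$ edges on $4$ or $5$ vertices, and with $9$ edges on $5$ vertices.

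\textbf{The residual graphs.} For each remaining $G$ I need $R(G,\Z_3)\le n+7\le 12$. I would get this by monotonicity-type arguments in the spirit of the paper's main proof. In a $\Z_3$-weighted $K_{12}$, first find a zero-sum $K_6$ (since $R(K_6,\Z_3)=10\le12$) or a zero-sum $K_4$. Then exploit the many choices of an embedding to adjust the sum: compare two copies of $G$ that differ by swapping one vertex, and note that the differences of sums are controlled by triangle or star differences. Whenever the weighting is not "locally constant", some copy attains every residue. In the locally constant case the weighting is essentially monochromatic or of the form $w(xy)=f(x)+f(y)$, and there a direct count gives a zero-sum copy.

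\textbf{Main obstacle.} The hard step is these residual small graphs, where no general theorem applies. I would handle them by this switching argument, or by a computer-checked exhaustive verification if the argument becomes messy. The equality statement then follows: every $G\not\cong K_3$ gets a bound of at most $n+7$, while $K_3$ attains $n+8$.
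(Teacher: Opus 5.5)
Your handling of $n\ge6$, of edgeless graphs, and of isolated vertices via $R(H\cup tK_1,\Z_3)\le\max\{R(H,\Z_3),|V(H)|+t\}$ is correct. The gap is at the step you flag as the main obstacle: the residual graphs are never actually bounded. After the cited forest, cycle, wheel and complete bipartite results, you are still left with $K_5-e$ and several six-edge graphs on five vertices, for instance $C_5$ plus a chord and $K_4-e$ with a pendant edge. For these you offer only a heuristic, and it does not work as stated.

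Finding a zero-sum $K_6$ or $K_4$ in $K_{12}$ tells you nothing about the weight of any copy of a proper subgraph, because zero-sum Ramsey numbers are not monotone under taking subgraphs. The assertion that ``whenever the weighting is not locally constant, some copy attains every residue'' is the entire content of the required bound, not a step toward it. A promised computer search is likewise not a proof. (Also, $K_{1,3}\cup K_1$ does not belong on your list of graphs without isolated vertices.)

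The missing idea is to use isolated vertices in the opposite direction. For $3\le n\le5$, set $\widehat G=G\cup(6-n)K_1$. This graph has six vertices and $e(\widehat G)=e(G)\equiv0\pmod3$, so \cref{thm:main} gives $R(\widehat G,\Z_3)\le 6+5=11$. Deleting the added isolated vertices from a zero-sum copy of $\widehat G$ leaves a zero-sum copy of $G$ with the same edge set. Hence $R(G,\Z_3)\le11<n+8$ for $n\in\{4,5\}$. For $n=3$ the only possibilities are $3K_1$, with $R=3$, and $K_3$, with $R=11=n+8$. This is exactly your own observation that isolated vertices do not affect zero-sum copies, and it makes the whole small-graph enumeration unnecessary.
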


The proof of \cref{thm:main} has two branches.
If $G$ contains an induced $P_4$ or $2K_2$, then two independent ways of switching vertices within a copy of $G$ impose strong restrictions on any counterexample.
These restrictions force the host weighting to have an affine form on a large set of vertices, where a degree-weighted zero-sum assignment produces a zero-sum copy of $G$.
If $G$ contains neither an induced $P_4$ nor an induced $2K_2$, then its rigid structure permits repeated reductions by true and false twins.
After these reductions, either the same switching argument applies or a rooted-forest representation gives the irreducible graph in one of a few structural forms whose Ramsey bounds can be proved directly.

The remainder of the paper is organized as follows.
In \cref{sec:preliminaries}, we develop the weighting, switching, and structural tools used in both branches.
We prove the two cases of \cref{thm:main} in \cref{sec:proof-main,sec:proof-main2}.
In \cref{sec:proof-main2}, we reduce the complementary class to irreducible graphs and derive the structural forms needed for the Ramsey argument.
We then establish the Ramsey bounds for these forms in \cref{sec:main2-irreducible-bounds}, with the proofs for the exceptional graphs $K_5-e$ and $K_{2,2,2}$ given in Appendices~\ref{app:k5minus-bound} and~\ref{app:k222-bound}, respectively.
We derive \cref{cor:main} in \cref{sec:proof-cor-main}.

\section{Preliminaries}
\label{sec:preliminaries}

For disjoint graphs $X$ and $Y$, the notation $X\cup Y$ denotes their disjoint union, and $X+Y$ denotes the graph obtained from $X\cup Y$ by adding every edge with one endpoint in $V(X)$ and the other in $V(Y)$.
For a family $\mathcal F$ of graphs, a graph is \emph{induced $\mathcal F$-free} if it contains no induced copy of any graph in $\mathcal F$.
For a positive integer $m$, write $[m]=\{1,\ldots,m\}$.
Write $\Z_3^\times=\{1,2\}$ for the set of nonzero elements of $\Z_3$.
If $w:E(K)\to\Z_3$ is a weighting of a complete graph $K$ and $J\subseteq K$ is a subgraph, write
\[
  w(J)=\sum_{e\in E(J)}w(e).
\]

We shall use the Erd\H{o}s--Ginzburg--Ziv theorem to prove \cref{lem:degree-completion}.
\begin{theorem}[Erd\H{o}s, Ginzburg and Ziv \cite{ErdosGinzburgZiv1961}]
  \label{thm:EGZ}
  For every positive integer $m$, every list of $2m-1$ elements of $\Z_m$ contains $m$ terms whose sum is zero in $\Z_m$.
\end{theorem}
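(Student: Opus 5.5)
The statement just quoted is the Erd\H{o}s--Ginzburg--Ziv theorem. I would follow the classical route: first reduce to prime $m$ by multiplicativity, then handle the prime case with the Chevalley--Warning theorem.

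\emph{Step 1 (multiplicativity).} Let $P(m)$ denote the statement for $\Z_m$; I claim $P(a)$ and $P(b)$ imply $P(ab)$. Take $2ab-1$ integers. By $P(a)$, any $2a-1$ of them contain $a$ terms with sum divisible by $a$. Remove such blocks greedily. As long as at least $2a-1$ terms remain we can extract another block. We stop with at most $2a-2$ terms left, so we obtain at least $2b-1$ disjoint blocks $B_1,\dots,B_{2b-1}$, each of size $a$. Write the block sums as $a s_1,\dots,a s_{2b-1}$. Applying $P(b)$ to the $s_i$ gives $b$ of them with sum divisible by $b$. The union of the corresponding $b$ blocks then has $ab$ terms, and its sum is divisible by $ab$. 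Since $P(1)$ is trivial, this step reduces everything to $m=p$ prime.

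\emph{Step 2 (prime case).} Let $a_1,\dots,a_{2p-1}\in\Z_p$. Over $\mathbb F_p$, consider the two polynomials
\[
  f_1=\sum_i x_i^{p-1}, \qquad f_2=\sum_i a_i x_i^{p-1},
\]
in $2p-1$ variables. Their degree sum is $2(p-1)<2p-1$. By Chevalley--Warning, the number of common zeros is divisible by $p$. The zero vector is one common zero, so a nonzero common zero $x$ exists. Let $I=\{i : x_i\neq0\}$. Then $x_i^{p-1}=1$ exactly for $i\in I$, so the equations become $|I|\equiv0\pmod p$ and $\sum_{i\in I}a_i\equiv0$. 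Since $0<|I|\le 2p-1$, we must have $|I|=p$, which gives the required $p$ terms.

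\emph{Main obstacle.} The only nontrivial ingredient is Chevalley--Warning. If a self-contained argument is wanted, I would prove it directly. The number of common zeros is congruent mod $p$ to
\[
  \sum_{x}\prod_j\bigl(1-f_j(x)^{p-1}\bigr).
\]
This polynomial has degree $<(p-1)(2p-1)$, so every monomial in it has some variable of exponent $<p-1$. For such a monomial, the sum over $x\in\mathbb F_p^{2p-1}$ vanishes, because $\sum_{t\in\mathbb F_p}t^k=0$ for $0\le k<p-1$, with the convention $0^0=1$. Hence the whole sum is $0$ mod $p$. The bookkeeping in Step 1 (counting the blocks) is routine.
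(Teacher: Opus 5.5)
Your proof is correct. In the multiplicativity step, exactly $a-1$ terms remain after $2b-1$ extractions, so the block count works. The degree condition $2(p-1)<2p-1$ for Chevalley--Warning holds for every prime, including $p=2$. The support of a nonzero common zero has size divisible by $p$ and lies between $1$ and $2p-1$, so it has size exactly $p$. Your self-contained proof of Chevalley--Warning is also sound: the indicator polynomial has degree at most $2(p-1)^2<(p-1)(2p-1)$, so every monomial has some exponent below $p-1$.

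The paper, however, does not prove this theorem at all. It quotes it from the 1961 paper as a black box. Moreover, it only ever applies it with $m=3$, to five elements of $\Z_3$: in \cref{lem:degree-completion}, in \cref{prop:twin-lifting}, and in the proof of \cref{lem:k222}.

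For that case your machinery is unnecessary. Among five residues modulo $3$, either some value occurs three times, giving sum $3x=0$. Otherwise each value occurs at most twice, so by pigeonhole all three values $0,1,2$ occur, and they sum to $0$.

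So your multiplicativity-plus-Chevalley--Warning route gives the full statement for every $m$. The paper's actual needs are met by this two-line pigeonhole check.
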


The next lemma is the bridge from affine edge weights to a zero-sum copy.
It assigns the vertices of $G$ to $n+2$ prescribed labels so that their degree-weighted contribution vanishes.
During the revision of this manuscript, we became aware that Costa had independently proved \cref{lem:degree-completion}; see \cite[Lemma~2.1]{CostaBoundedTreewidth2026}.

\begin{lemma}
  \label{lem:degree-completion}
  Let $G$ be an $n$-vertex graph with $3\mid e(G)$.
  Given labels $z_1,\ldots,z_{n+2}\in\Z_3$, there is an injection $\sigma:V(G)\hookrightarrow\{1,\ldots,n+2\}$ such that $\sum_{v\in V(G)}\deg_G(v)\cdot z_{\sigma(v)}=0$ in $\Z_3$.
\end{lemma}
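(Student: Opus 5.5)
The plan is to reduce the problem to a statement about multisets of labels by grouping vertices according to their degree modulo $3$. Let $D_0,D_1,D_2$ be the sets of vertices whose degrees are $\equiv 0,1,2\pmod 3$, and let $a,b,c$ be their sizes. Since $\deg\equiv -1$ on $D_2$, the target sum becomes
\[
  \sum_{v\in D_1} z_{\sigma(v)}-\sum_{v\in D_2} z_{\sigma(v)} .
\]
The labels given to $D_0$ are irrelevant. Since $\sum_v \deg_G(v)=2e(G)\equiv 0$, we get $b+2c\equiv 0$, that is, $b\equiv c\pmod 3$. So it suffices to choose disjoint sets of label indices $S_1$ with $|S_1|=b$ and $S_2$ with $|S_2|=c$ such that $\sum_{S_1}z=\sum_{S_2}z$. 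The $D_0$ vertices then take any $a$ of the remaining indices, which exist because $a+b+c=n\le n+2$. Replacing every $z_i$ by $-z_i$ does not change the problem, and neither does the roles-swapped version obtained by negating the whole expression. Hence I may assume $b\ge c$, so $b-c=3t$ for some $t\ge 0$.

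The construction proceeds in two steps.
\begin{enumerate}[label=(\roman*)]
  \item Form $c$ disjoint pairs of equal labels $z_i=z_j$. For each pair, put one index in $S_2$ and the other in $S_1$; their contributions cancel.
  \item Handle the remaining $3t$ vertices of $D_1$ by finding $t$ disjoint triples of labels, each with zero sum. Each such triple goes into $S_1$. Here \cref{thm:EGZ} with $m=3$ is the engine: among any $5$ labels, three sum to zero.
\end{enumerate}
It is also useful that a zero-sum triple in $\Z_3$ is either three equal values or one of each value.

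The main obstacle is the counting: making the pairs and the triples fit simultaneously into only $n+2$ labels. A greedy extraction of equal pairs from $n+2$ labels leaves at most $3$ unpaired labels, one of each value at most. Each EGZ triple needs a pool of $5$ labels, so the $2$ spare labels give exactly the slack for the last triple.

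I would run the argument in the following order. First extract the $t$ zero-sum triples by repeatedly applying \cref{thm:EGZ}; this needs a pool of size at least $5$ at each step, which $3t+2\le n+2$ guarantees as long as labels are reused carefully. Then check that at least $c$ equal pairs survive among the remaining $n+2-3t=a+2c+2$ labels. A set of $k$ labels contains at least $\lceil (k-3)/2\rceil$ disjoint equal pairs, and $a+2c+2-3\ge 2c-1$. This is just short by one when $a=0$, and that boundary case is where care is needed.

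To close the gap I would handle the boundary case directly. If the leftover labels are one of each value and pairs fall short, I would trade: either replace a pair plus a triple with a different grouping, or use that a triple $\{0,1,2\}$ can be split as $S_1$-part versus $S_2$-part in another balanced way. Alternatively, I would choose the triples preferentially among labels of a value with odd multiplicity, so that the pairing loses nothing. The extreme cases $c=0$ and $b=c$ reduce to a single step (only triples, or only pairs) and are immediate. I expect the remaining work to be a short case check on the multiplicities $m_0,m_1,m_2$ of the three label values.
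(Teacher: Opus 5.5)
Your argument is correct and complete as it stands. The boundary case you worry about does not occur; it comes from an arithmetic slip. By your own bound, the pool of $k=a+2c+2$ labels left after the triples contains at least $\lceil (a+2c-1)/2\rceil$ disjoint equal pairs. For $a=0$ this is $\lceil c-\tfrac12\rceil=c$, not $c-1$.

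More transparently, the number of disjoint equal pairs is $(k-o)/2$, where $o\le 3$ is the number of values of odd multiplicity and $o\equiv k\pmod 2$. If $a$ is even, then $o\le 2$ and there are at least $(a+2c)/2\ge c$ pairs. If $a$ is odd, then $a\ge1$ and there are at least $(a+2c-1)/2\ge c$ pairs. So the proposed trading and the case check on $m_0,m_1,m_2$ should simply be dropped.

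The triple extraction also needs no care about reusing labels. Before the $j$-th triple, at least $n+2-3(j-1)\ge 3(t-j)+5\ge 5$ labels remain, because $n\ge b\ge 3t$. Likewise, the reduction to $b\ge c$ needs no negation trick, since the condition $\sum_{S_1}z=\sum_{S_2}z$ is symmetric in $S_1$ and $S_2$.

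Your route differs from the paper's. The paper applies \cref{thm:EGZ} inside each degree class, removing zero-sum triples from $U_1$ or from $U_2$ until each class has at most two unassigned vertices. This forces $r'=s'\in\{0,1,2\}$. It then cancels these leftover vertices across the two classes, using pigeonhole on four labels when $r'=1$ and two disjoint pairs with equal sum among six labels when $r'=2$.

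You instead cancel $\min(b,c)$ vertices of $D_1$ against $D_2$ all at once, using equal-label pairs. You invoke EGZ only for the excess $|b-c|=3t$ in the larger class. Your version replaces the paper's terminal case analysis with a single parity count on label multiplicities, and uses EGZ at most as often. The paper's version keeps the cross-class matching to at most two pairs, so it only ever needs a fixed-size pigeonhole check on at most six labels.
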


\begin{proof}
  Let $U_1$ and $U_2$ be the sets of vertices whose degrees are congruent to $1$ and $2$ modulo three, respectively, and write $r=|U_1|$ and $s=|U_2|$.
  Since $r+2s\equiv\sum_{v\in V(G)}\deg_G(v)=2e(G)\equiv0\pmod3$, we have $r\equiv s\pmod3$.
  Choose a set $I\subseteq\{1,\ldots,n+2\}$ of size $r+s+2$.
  We first assign distinct indices from $I$ to the vertices in $U_1\cup U_2$.
  As long as one of $U_1$ and $U_2$ has at least three unassigned vertices, choose three such vertices from the same set.
  At each such step, the number of unused indices in $I$ is the number of unassigned vertices plus two and is therefore at least five.
  Choose any five unused indices.
  By \cref{thm:EGZ}, three of the corresponding labels have sum zero.
  Assign those indices to the chosen vertices.
  Their total contribution is zero, whether their degree residue is $1$ or $2$.
  Repeat this procedure until fewer than three vertices remain in each of $U_1$ and $U_2$.
  If $r'$ and $s'$ are the respective numbers of remaining vertices, then $0\le r',s'\le2$ and $r'\equiv s'\pmod3$.
  Thus, $r'=s'=a$ for some $a\in\{0,1,2\}$.

  If $a=0$, there is nothing more to assign.
  If $a=1$, four indices remain, so two of the corresponding labels are equal.
  Let $z$ be their common label, and assign those two indices to the remaining vertices of $U_1$ and $U_2$, respectively.
  Their total contribution is $z+2z=0$.
  Finally, suppose that $a=2$, so six labels remain.
  If some value $x$ occurs at least four times, choose two disjoint pairs of labels $(x,x)$.
  Otherwise, two distinct values $x$ and $y$ each occur at least twice, and we choose two disjoint pairs $(x,y)$.
  Thus, in either case, there are two disjoint pairs with the same label sum.
  Assign one pair to the two remaining vertices of $U_1$ and the other pair to the two remaining vertices of $U_2$.
  If their common sum is $z$, their total contribution is $z+2z=0$.
  We have therefore constructed an injection $\sigma$ on $U_1\cup U_2$ such that
  \[
    \sum_{v\in U_1\cup U_2}\deg_G(v) \cdot z_{\sigma(v)}=0.
  \]
  Since $\sigma$ uses $r+s$ of the $n+2$ label indices, $n-r-s+2$ indices remain unused.
  Hence, $\sigma$ can be extended injectively to the remaining $n-r-s$ vertices.
  Their degrees are divisible by three, so this extension does not change the weighted sum.
\end{proof}

For a finite set $V$, write $K_V$ for the complete graph on vertex set $V$.
Let $w:E(K_V)\to\Z_3$ be a weighting.
For four distinct vertices $a,b,u,v\in V$, define the ordered rectangle $(a,b;u,v)$ and its value by
\[
  \Delta_w(a,b;u,v)
  =w(au)+w(bv)-w(av)-w(bu).
\]
Note that interchanging the two vertices within either pair negates its value and therefore preserves whether it is zero.
We call $(a,b;u,v)$ \emph{zero} or \emph{nonzero} according as $\Delta_w(a,b;u,v)$ is zero or nonzero in $\Z_3$.
A weighting $w$ on $K_U$ is \emph{affine} if there are $c\in\Z_3$ and labels $\phi_u\in\Z_3$ for $u\in U$ such that $w(uv)=\phi_u+\phi_v+c$ for all distinct $u,v\in U$.

The next lemma is the $\Z_3$-case of \cite[Lemma~4.2]{ChiHeCyclesWheels}.
It turns the local vanishing of every rectangle difference into the global affine representation needed for the final embedding argument.

\begin{lemma}[Chi and He \cite{ChiHeCyclesWheels}]
  \label{lem:rectangle-form}
  Let $U$ be a set with $|U|\ge4$.
  If every ordered rectangle $(a,b;u,v)$ on four distinct vertices of $U$ is zero, then the weighting on $K_U$ is affine.
\end{lemma}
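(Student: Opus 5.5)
The plan is to fix one vertex as a base point and construct the labels $\phi_u$ and the constant $c$ explicitly. Then I will check $w(uv)=\phi_u+\phi_v+c$ using the vanishing of rectangles. The key observation is that a zero rectangle $(a,b;u,v)$ says exactly
\[
  w(au)-w(av)=w(bu)-w(bv).
\]
So the difference $w(xu)-w(xv)$ does not depend on the choice of $x\in U\setminus\{u,v\}$. Write $D(u,v)$ for this common value, which is well defined because $|U|\ge4$ leaves at least two choices of $x$. The same identity read the other way shows that $w(au)-w(bu)$ is independent of $u$. Since $D(u,v)+D(v,t)=D(u,t)$ whenever a common vertex $x\notin\{u,v,t\}$ exists, $D$ is additive. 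I will deduce from this that there are potentials $\psi_u$ with $D(u,v)=\psi_u-\psi_v$.

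To make this concrete, I fix $r\in U$ and set $\psi_r=0$ and $\psi_u=D(u,r)$. Additivity $D(u,v)=D(u,r)-D(v,r)$ requires a common witness $x\notin\{u,v,r\}$, which exists since $|U|\ge4$. Next I define $g(uv)=w(uv)-\psi_u-\psi_v$. Then $g(xu)-g(xv)=D(u,v)-\psi_u+\psi_v=0$ for all distinct $x,u,v$. Hence $g$ is constant on edges sharing a vertex, and since $K_U$'s line graph is connected, $g$ is a constant $c'$. This gives $w(uv)=\psi_u+\psi_v+c'$, so $w$ is affine with $\phi=\psi$ and $c=c'$.

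Because we are in $\Z_3$, where $2$ is invertible, this normalization causes no trouble. In fact none is needed, since the form $\phi_u+\phi_v+c$ is matched directly. The step requiring care is the consistency of $D$. Its well-definedness needs two distinct witnesses outside $\{u,v\}$ to be compared by a single rectangle. Its additivity needs a witness outside three vertices. Both hold precisely because $|U|\ge4$, and this is where the hypothesis enters. I expect this bookkeeping to be the main, though mild, obstacle. One should also confirm that rectangles with repeated vertices are never invoked.
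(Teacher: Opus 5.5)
Your argument is correct and complete. Three steps carry it:
\begin{enumerate}[label=\textup{(\arabic*)}]
  \item Well-definedness of $D(u,v)$ is exactly the vanishing of the rectangle $(x,y;u,v)$ for two witnesses $x,y\notin\{u,v\}$.
  \item The identity $D(u,v)=\psi_u-\psi_v$ follows from additivity through $r$, using a witness outside $\{u,v,r\}$. It also needs the antisymmetry $D(r,v)=-D(v,r)$, which you use implicitly; it is immediate from the definition with a common witness.
  \item The computation $g(xu)-g(xv)=D(u,v)-(\psi_u-\psi_v)=0$, together with connectivity of the line graph of $K_U$, makes $g$ constant.
\end{enumerate}
No rectangle with repeated vertices is ever invoked. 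The hypothesis $|U|\ge4$ enters exactly where you say: comparing two witnesses outside a pair, and finding one witness outside a triple.

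The paper does not prove this lemma itself. It imports it as the $\Z_3$ case of Lemma~4.2 of Chi and He, so there is no in-paper argument to compare with, and your potential-function proof serves as a short self-contained replacement. It never divides by $2$, since the form $\phi_u+\phi_v+c$ is produced directly. It therefore proves the statement verbatim for weightings with values in any abelian group. Two of your remarks are unused and can be omitted: the aside about $2$ being invertible in $\Z_3$, and the observation that $w(au)-w(bu)$ is independent of $u$.
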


The following result extends an affine weighting from a core to all but at most two vertices when no two nonzero rectangles are vertex-disjoint.

\begin{lemma}
  \label{lem:compression}
  Let $V$ be a finite set and let $U\subseteq V$ with $|U|\ge6$.
  Let $w:E(K_V)\to\Z_3$ satisfy the following conditions:
  \begin{enumerate}[label=\textup{(\roman*)}]
    \item the restriction of $w$ to $K_U$ is affine;
    \item $K_V$ contains no two vertex-disjoint nonzero rectangles.
  \end{enumerate}
  Then there is a set $D\subseteq V$ with $|D|\le2$ such that the restriction of $w$ to $K_{V\setminus D}$ is affine.
\end{lemma}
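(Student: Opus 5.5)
Throughout, normalize the affine form on $U$. Since $w(uv)=\phi_u+\phi_v+c$ on $K_U$, we may replace $w$ by $w'(xy)=w(xy)-c$. This preserves every rectangle value, because each rectangle contains two positive and two negative terms, and it preserves affineness. So assume $c=0$. For each vertex $x\in V\setminus U$ I would measure how far $x$ is from extending the affine structure. Consider the function $g_x(u)=w(xu)-\phi_u$ on $U$. Then $x$ extends the form on $U\cup\{x\}$ exactly when $g_x$ is constant on $U$; the constant is then $\phi_x$.

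Now relate non-constancy of $g_x$ to nonzero rectangles. For $a,b\in U$ we have
\[
\Delta_w(x,a;u,v)=w(xu)-w(xv)-\phi_u+\phi_v=g_x(u)-g_x(v).
\]
Hence $g_x(u)\ne g_x(v)$ is equivalent to every rectangle $(x,a;u,v)$ with $a\in U\setminus\{u,v\}$ being nonzero. Call $x$ \emph{bad} if $g_x$ is non-constant, and call the rest \emph{good}.

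\textbf{Step 1: two bad vertices.} Let $x,y$ be bad. Pick $u,v\in U$ with $g_x(u)\neq g_x(v)$, and $u',v'\in U$ with $g_y(u')\ne g_y(v')$. I want these four points distinct, and disjoint from two further points $a,b\in U$. Then $(x,a;u,v)$ and $(y,b;u',v')$ are vertex-disjoint nonzero rectangles, contradicting (ii). Because $g_x$ is non-constant, its minority value class and the complement give many separating pairs. To avoid a fixed pair, note that for any two disjoint points one can still find a separating pair among the remaining $|U|-2\ge4$ points, unless $g_x$ is constant off those points. If $g_x$ is non-constant only through one or two special points, I handle that by choosing the special point in the pair and its partner from a large class. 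With $|U|\ge6$ there are enough points: we need $4$ points from $U$ plus $a,b$, which is six, and this is exactly where $|U|\ge6$ enters. The case analysis on the sizes of the level sets of $g_x,g_y$ is the main obstacle. I would handle it by a short pigeonhole argument, possibly allowing $a=y$ and $b=x$ to save points of $U$.

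\textbf{Step 2: good vertices among themselves.} Step 1 leaves at most two bad vertices; more precisely, I would aim for at most one bad vertex from $V\setminus U$ in this manner. Next take the good vertices $G'=V\setminus(U\cup B)$, where $B$ is the set of bad vertices. For $x\in G'$ set $\phi_x=g_x$. For two good vertices $x,y$ we need $w(xy)=\phi_x+\phi_y$. Compute, for $u,v\in U$,
\[
\Delta(x,u;y,v)=w(xy)+w(uv)-w(xv)-w(uy)=w(xy)-\phi_x-\phi_y .
\]
So if this is nonzero, every such rectangle with $u,v\in U$ is nonzero.

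\textbf{Step 3: the inconsistent pairs form a small vertex cover.} Form the graph $H$ on $G'$ whose edges are the pairs $xy$ with $w(xy)\neq\phi_x+\phi_y$. Two disjoint edges $xy,x'y'$ of $H$, using four disjoint points of $U$, give disjoint nonzero rectangles. A bad vertex $z$ together with an $H$-edge $xy$ also gives disjoint nonzero rectangles $(z,a;u,v)$ and $(x,u';y,v')$, again for $|U|\ge6$. Therefore either $B=\emptyset$ and $H$ is a star or a triangle, or $|B|\ge1$ and $H$ is empty.

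In each case deleting $D$ with $|D|\le2$ leaves a consistent affine form on $V\setminus D$: delete the bad vertices, the star center, or two vertices of the triangle. With $|B|\le2$ in the worst case, $D=B$ works.
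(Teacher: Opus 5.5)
\paragraph{Step~1 is false.}
Your Step~1 fails, and it is the hard part of the lemma: two bad vertices need not give two vertex-disjoint nonzero rectangles. Here is a counterexample.
\begin{itemize}
  \item Take $|U|=6$, fix $u_0\in U$, and let $V=U\cup\{x_1,\ldots,x_k\}$.
  \item Put $w(x_iu_0)=1$ for every $i$, and $w(e)=0$ for every other edge.
\end{itemize}
Then $w$ vanishes on $K_U$, so (i) holds. It also vanishes on $K_{V\setminus\{u_0\}}$, so every nonzero rectangle contains $u_0$ and (ii) holds. But each $g_{x_i}$ is $1$ at $u_0$ and $0$ elsewhere, so all $k$ outside vertices are bad. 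Every pair separating any $g_{x_i}$ contains $u_0$. Hence no pigeonhole choice of points, including $a=y$ and $b=x$, can produce disjoint rectangles, and your final choice $D=B$ has size $k$.

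For $k\ge3$, every valid $D$ must contain $u_0$. Indeed, each rectangle $(x_i,a;u_0,u)$ with $a,u\in U\setminus\{u_0\}$ is nonzero, so otherwise all $k$ of the $x_i$ would have to be deleted. A scheme that only deletes vertices outside $U$ therefore cannot work.

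\paragraph{The missing idea.}
You need to show that this is the only way several bad vertices can coexist.
\begin{itemize}
  \item For bad $x,y$, every pair $\{u,v\}$ with $g_x(u)\ne g_x(v)$ must meet every pair $\{u',v'\}$ with $g_y(u')\ne g_y(v')$. Otherwise, adding two further points of $U$ gives your two disjoint nonzero rectangles.
  \item From this, a short argument gives some $a\in U$ with both $g_x$ and $g_y$ constant on $U\setminus\{a\}$.
  \item A nonconstant function on $U$ is constant on $U\setminus\{a\}$ for at most one $a$. So this vertex, call it $u_0$, is the same for all bad vertices.
\end{itemize}

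Next, shrink the core to $U'=U\setminus\{u_0\}$, which has at least $5$ vertices. On $U'$ every $g_z$ with $z\in V\setminus U$ is constant, say with value $\lambda_z$. Now rerun your Step~3 relative to $U'$. Suppose $w(yz)\ne\lambda_y+\lambda_z$ and $x\notin\{y,z\}$ is bad. Then for distinct $u,a,r,s\in U'$, the rectangles $(x,a;u_0,u)$ and $(y,r;z,s)$ are disjoint and nonzero. So every defect edge meets every bad vertex. This gives:
\begin{itemize}
  \item $D=\{u_0\}$ when there are at least three bad vertices;
  \item $D=\{u_0,x_1\}$ when there are exactly two, which is what the paper does (your $D=B$ also works here).
\end{itemize}

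Your Steps~2--3 are correct and settle all cases with at most two bad vertices. Without the common-exceptional-vertex argument, however, the proof is incomplete.
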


\begin{proof}
  Set $B=V\setminus U$.
  Choose $c\in\Z_3$ and labels $\phi_u\in\Z_3$ for $u\in U$ such that $w(uv)=\phi_u+\phi_v+c$ for all distinct $u,v\in U$.
  For each $x\in B$, define a function $g_x:U\to\Z_3$ by $g_x(u)=w(xu)-\phi_u-c$.
  Call $x$ good if $g_x$ is constant, and write $g_x\equiv\lambda_x$ when $x$ is good.
  Call $x$ bad if $g_x$ is nonconstant.
  For distinct $a,u,v\in U$, a direct calculation gives
  \[
    \Delta_w(x,a;u,v)=g_x(u)-g_x(v).
  \]
  Thus, if $g_x(u)\ne g_x(v)$ for a bad vertex $x$, then $(x,a;u,v)$ is a nonzero rectangle for every $a\in U\setminus\{u,v\}$.
  We split according to the number of bad vertices in $B$.

  \medskip
  \noindent\textbf{Case 1. No vertex in $B$ is bad.}
  \medskip

  Define a graph $J$ on $B$ by joining $x$ and $y$ when $w(xy)\ne\lambda_x+\lambda_y+c$.
  If $J$ contained disjoint edges $x_1y_1$ and $x_2y_2$, choose distinct $r_1,s_1,r_2,s_2\in U$.
  For $i\in\{1,2\}$,
  \[
    \Delta_w(x_i,r_i;y_i,s_i)
    =w(x_i y_i)-\lambda_{x_i}-\lambda_{y_i}-c\ne0,
  \]
  giving two vertex-disjoint nonzero rectangles.
  Hence, the matching number of $J$ is at most one.
  Set $U_0=U$.
  If $J$ is empty, set $B_0=B$; otherwise, fix $xy\in E(J)$ and set $B_0=B\setminus\{x,y\}$.
  Every edge of $J$ meets $\{x,y\}$, so $J[B_0]$ is empty and $|V\setminus(U_0\cup B_0)|\le2$.

  \medskip
  \noindent\textbf{Case 2. Exactly one vertex $x\in B$ is bad.}
  \medskip

  Choose $u,v\in U$ with $g_x(u)\ne g_x(v)$ and $a\in U\setminus\{u,v\}$.
  If good vertices $y,z\in B$ satisfied $w(yz)\ne\lambda_y+\lambda_z+c$, choose distinct $r,s\in U\setminus\{a,u,v\}$.
  Then $\Delta_w(x,a;u,v)=g_x(u)-g_x(v)$ is nonzero, while $\Delta_w(y,r;z,s)=w(yz)-\lambda_y-\lambda_z-c\ne0$.
  These two rectangles are vertex-disjoint, a contradiction.
  Thus, all edges among the good vertices have the affine form.
  Set $U_0=U$ and $B_0=B\setminus\{x\}$.

  \medskip
  \noindent\textbf{Case 3. At least two vertices in $B$ are bad.}
  \medskip

  For a bad vertex $z$, let $\mathcal D_z=\bigl\{\{u,v\}\in\tbinom{U}{2}:g_z(u)\ne g_z(v)\bigr\}$.
  For distinct bad vertices $x,y$, every member of $\mathcal D_x$ meets every member of $\mathcal D_y$.
  Otherwise, disjoint pairs in $\mathcal D_x$ and $\mathcal D_y$, together with two further vertices of $U$, give two vertex-disjoint nonzero rectangles.

  We next show that there exists $a\in U$ such that both $g_x$ and $g_y$ are constant on $U\setminus\{a\}$.
  Choose $\{a,b\}\in\mathcal D_x$.
  No pair contained in $U\setminus\{a,b\}$ can belong to $\mathcal D_y$, since it would be disjoint from $\{a,b\}$.
  Hence, $g_y$ is constant, say equal to $\alpha$, on $U\setminus\{a,b\}$.
  Since $g_y$ is nonconstant, at least one of $g_y(a)$ and $g_y(b)$ differs from $\alpha$.
  Suppose that both differ from $\alpha$.
  For any $E\in\mathcal D_x$, choose $t\notin E\cup\{a,b\}$.
  Then $\{a,t\},\{b,t\}\in\mathcal D_y$, so $E$ must meet both pairs.
  As $t\notin E$, this forces $E=\{a,b\}$.
  However, for any $q\notin\{a,b\}$, at least one of $\{a,q\}$ and $\{b,q\}$ lies in $\mathcal D_x$, a contradiction.
  Thus, after interchanging $a$ and $b$ if necessary, $g_y(a)\ne\alpha=g_y(b)$, and consequently $g_y$ is constant on $U\setminus\{a\}$.
  If some $E\in\mathcal D_x$ did not contain $a$, choose $t\notin E\cup\{a\}$.
  Then $\{a,t\}\in\mathcal D_y$ would be disjoint from $E$, a contradiction.
  Therefore, every pair in $\mathcal D_x$ contains $a$, which means that $g_x$ is constant on $U\setminus\{a\}$.
  Thus, both $g_x$ and $g_y$ are constant on $U\setminus\{a\}$.

  We next show that such an exceptional vertex is unique for a nonconstant function.
  Suppose that $g_z$ is constant on both $U\setminus\{a\}$ and $U\setminus\{b\}$, where $a\ne b$, and choose $t\in U\setminus\{a,b\}$.
  Since $b,t\in U\setminus\{a\}$, we have $g_z(b)=g_z(t)$; similarly, since $a,t\in U\setminus\{b\}$, we have $g_z(a)=g_z(t)$.
  Thus, $g_z$ is constant on all of $U$, a contradiction.
  Now fix a bad vertex $x_0$.
  For each other bad vertex $y$, the conclusion for the pair $x_0,y$ gives a vertex $a_y\in U$ such that both $g_{x_0}$ and $g_y$ are constant on $U\setminus\{a_y\}$.
  Since $g_{x_0}$ is nonconstant, the uniqueness just proved shows that all the vertices $a_y$ are equal.
  Denote their common value by $u_0$.
  Then every bad vertex $z$ has $g_z$ constant on $U\setminus\{u_0\}$.

  Let $U'=U\setminus\{u_0\}$.
  By the choice of $u_0$, every bad vertex $z\in B$ has $g_z$ constant on $U'$, and the same is true for every good vertex by definition.
  For every $z\in B$, define $\lambda_z\in\Z_3$ by $g_z(u)=\lambda_z$ for all $u\in U'$.
  Define a defect graph $J$ on $B$ by joining $y$ and $z$ when $w(yz)\ne\lambda_y+\lambda_z+c$.
  Fix an arbitrary bad vertex $x\in B$.
  If an edge $yz\in E(J)$ were not incident with $x$, choose distinct $u,a,r,s\in U'$.
  Since $g_x(u_0)\ne\lambda_x$, the rectangles $(x,a;u_0,u)$ and $(y,r;z,s)$ would be vertex-disjoint and nonzero; for the second rectangle, $\Delta_w(y,r;z,s)=w(yz)-\lambda_y-\lambda_z-c\ne0$.
  Repeating this argument for each bad vertex $x\in B$, we conclude that every edge of $J$ is incident with every bad vertex.
  If at least three vertices are bad, set $U_0=U'$ and $B_0=B$; then $J$ has no edge.
  If exactly two vertices $x_1,x_2$ are bad, set $U_0=U'$ and $B_0=B\setminus\{x_1\}$, then $E(J)\subseteq\{x_1x_2\}$, so $J[B_0]$ has no edge.
  In both cases, $|V\setminus(U_0\cup B_0)|\le2$.
  \medskip

  In all three cases, every $g_z$ with $z\in B_0$ is constant on $U_0$.
  Denote its value by $\lambda_z$.
  Moreover, $w(yz)=\lambda_y+\lambda_z+c$ for all distinct $y,z\in B_0$.
  Let $D=V\setminus(U_0\cup B_0)$.
  For $v\in U_0\cup B_0$, set $\theta_v=\phi_v$ if $v\in U_0$ and $\theta_v=\lambda_v$ if $v\in B_0$.
  Then every edge $xy$ of $K_{U_0\cup B_0}$ satisfies $w(xy)=\theta_x+\theta_y+c$.
  Hence, $w$ is affine on $K_{V\setminus D}$ and $|D|\le2$, completing the proof of~\cref{lem:compression}.
\end{proof}

\section{Proof of \cref{thm:main}: When $G$ contains an induced $P_4$ or an induced $2K_2$}
\label{sec:proof-main}

Assume that $G$ contains an induced $P_4$ or an induced $2K_2$.
We first identify the pair of independent switches supplied by this configuration.

\begin{claim}
  \label{clm:private-neighbors}
  There are four distinct vertices $x,y,x_0,y_0\in V(G)$ such that $xy\notin E(G)$, $x_0\in N_G(x)\setminus N_G(y)$, and $y_0\in N_G(y)\setminus N_G(x)$.
\end{claim}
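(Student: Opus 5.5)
The plan is a direct case analysis on which of the two induced configurations $G$ contains; in each case we read off the four vertices from the labelling of that configuration.

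\emph{Induced $2K_2$.} Suppose $G$ contains an induced $2K_2$ with edges $ab$ and $cd$. Induced means that $ac,ad,bc,bd\notin E(G)$. Set $x=a$, $x_0=b$, $y=c$ and $y_0=d$. Then $xy=ac\notin E(G)$. Next, $x_0=b$ is adjacent to $a$ and not to $c$, so $x_0\in N_G(x)\setminus N_G(y)$. Symmetrically, $y_0=d$ is adjacent to $c$ and not to $a$, so $y_0\in N_G(y)\setminus N_G(x)$. The four vertices are distinct because they are the four vertices of the copy.

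\emph{Induced $P_4$.} Suppose $G$ contains an induced path $abcd$. Its edges are $ab,bc,cd$, and induced means that $ac,bd,ad\notin E(G)$. We take the two interior vertices as the nonadjacent pair with the endpoints as private neighbours, so choose $x=b$ and $y=d$? That choice fails here, since then $xy=bd\notin E(G)$ holds, but $c$ is adjacent to both $b$ and $d$, so $c$ cannot serve as a private neighbour. Instead take $x=a$, $y=d$, $x_0=b$ and $y_0=c$. Then $xy=ad\notin E(G)$. The vertex $b$ is adjacent to $a$ and not to $d$, so $x_0\in N_G(x)\setminus N_G(y)$. The vertex $c$ is adjacent to $d$ and not to $a$, so $y_0\in N_G(y)\setminus N_G(x)$. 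All four vertices are distinct.

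Both cases therefore go through just by reading off the non-edges guaranteed by inducedness, and there is no real obstacle. The only care needed is in the $P_4$ case: the nonadjacent pair $\{x,y\}$ must be the two endpoints of the path, not an interior/endpoint pair, because an interior vertex adjacent to both candidates cannot be a private neighbour.
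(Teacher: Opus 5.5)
Your proof is correct and matches the paper's: for an induced $2K_2$ you take its two edges as $xx_0$ and $yy_0$, and for an induced path $v_1v_2v_3v_4$ you take $(x,x_0,y_0,y)=(v_1,v_2,v_3,v_4)$, exactly as the paper does. The only blemish is the aborted first attempt in the $P_4$ case, which you should delete: the interior vertices $b,c$ are adjacent, so they could never be the nonadjacent pair.
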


\begin{proof}
  If $G$ contains an induced path $v_1v_2v_3v_4$, take $(x,x_0,y_0,y)=(v_1,v_2,v_3,v_4)$.
  Otherwise, take $xx_0$ and $yy_0$ to be the two edges of an induced $2K_2$.
  In either case, $xx_0,yy_0\in E(G)$, whereas $xy,x_0y,xy_0\notin E(G)$, and the claim follows.
\end{proof}

Fix $x,y,x_0,y_0$ as in \cref{clm:private-neighbors}.
Let $w:E(K_{n+4})\to\Z_3$ be a weighting, and suppose for a contradiction that it contains no zero-sum copy of $G$.

\begin{claim}
  \label{clm:no-disjoint-nonzero-rectangles}
  The weighting $w$ contains no two vertex-disjoint nonzero rectangles.
\end{claim}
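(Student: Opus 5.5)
Suppose, for a contradiction, that $(a_1,b_1;u_1,v_1)$ and $(a_2,b_2;u_2,v_2)$ are vertex-disjoint nonzero rectangles in $K_{n+4}$. The plan is to embed $G$ so that each rectangle sits on the four special vertices in a way that lets us ``switch'' within it, and then use the two independent switches to realize all three residues of $w(G')$. Concretely, the switch uses the non-adjacent pair $x,y$ and their private neighbours $x_0,y_0$: place $x,y$ on the vertices $a_1,b_1$ and $x_0,y_0$ on $u_1,v_1$. Interchanging the images of $x$ and $y$ (keeping all other vertices fixed) replaces the edges $xx_0\mapsto a_1u_1$, $yy_0\mapsto b_1v_1$ by $b_1u_1,a_1v_1$. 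Every other edge at $x$ or $y$ goes to a common neighbour, or to a vertex adjacent to exactly one of them. This is not a clean swap, so the first step is to arrange the switch so that only the rectangle difference appears. I would try swapping the images of $x$ and $y$ and simultaneously the images of $x_0$ and $y_0$ where needed. The cleanest option is to choose the embedding and the switch so that the change in total weight is exactly $\pm\Delta_w(a_1,b_1;u_1,v_1)$ plus terms that cancel.

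The difficulty is that $x$ and $y$ may have other private neighbours. So I would instead use a different switch: keep the image of $x$ fixed and move only the image of $x_0$. Take an embedding $f$ with $f(x)=a_1$ and $f(y)=b_1$, and with $f(x_0)=u_1$ and $v_1$ unused; there are $4$ spare host vertices since the host has $n+4$ vertices. Moving $x_0$ from $u_1$ to $v_1$ changes the weight by $\sum_{z\in N(x_0)}\bigl(w(f(z)v_1)-w(f(z)u_1)\bigr)$. Comparing with the same move for the weighting ``seen'' from $b_1$ does not directly isolate $\Delta$. The better formulation is to compare four embeddings. Let $f_{p,q}$ send $x\mapsto p$ and $x_0\mapsto q$ for $(p,q)\in\{a_1,b_1\}\times\{u_1,v_1\}$, with everything else fixed and $b_1$ or $a_1$ left unused accordingly; here $y$ goes elsewhere. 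Then the alternating sum $\sum \pm w(f_{p,q}(G))$ equals $\Delta_w(a_1,b_1;u_1,v_1)$, since the only edge whose weight depends jointly on $p$ and $q$ is $xx_0$. Since $\Delta\neq0$, the four values $w(f_{p,q}(G))$ are not all equal, so at least two distinct residues occur.

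Now use the second rectangle independently, on the pair $y,y_0$, with images in $\{a_2,b_2\}\times\{u_2,v_2\}$. This requires six host vertices besides the $n-4$ other vertices: $2$ for each of $x,x_0,y,y_0$ gives $8$, so $n-4+8=n+4$, which is exactly our budget. Hence we get a $4\times4$ family of embeddings $f_{p,q,p',q'}$. The total weight is $S(p,q,p',q')=A(p,q)+B(p',q')+C(p,q,p',q')$, where $C$ collects the edges between $\{x,x_0\}$ and $\{y,y_0\}$. The choice in \cref{clm:private-neighbors} is exactly what makes $C$ vanish: $xy,xy_0,x_0y\notin E(G)$, and only $x_0y_0$ may be an edge. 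That edge spoils separability, but only through the single term $w(qq')$. I would handle it by noting that the double alternating sum over $(p,q)$ still isolates $\Delta_1$ for each fixed $(p',q')$, because $w(qq')$ does not depend on $p$.

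The main obstacle is the final counting step: showing that the residues attained by $S$ cover $\Z_3$, so that some embedding is zero-sum. Each rectangle forces its own coordinate to take two distinct values, whose difference is determined by $\Delta$, up to the cross term. I would show that the sums take at least three values using a Cauchy--Davenport-type argument on $\Z_3$: two $2$-element sets $\{\alpha,\alpha+\delta_1\}+\{\beta,\beta+\delta_2\}$ with $\delta_1,\delta_2\ne0$ have sumset of size $3$. The $x_0y_0$ cross term must be absorbed by fixing $q$ or $q'$ appropriately, for example by varying only $p$ and $p'$ where possible. I expect getting a genuinely product-structured family despite this term to be the delicate point.
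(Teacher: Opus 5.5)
Your framework matches the paper's: map the other $n-4$ vertices by a bijection onto the host vertices outside the two rectangles, use one rectangle for the pair $(x,x_0)$ and one for $(y,y_0)$, and finish with the fact that $\{\alpha,\alpha+\delta_1\}+\{\beta,\beta+\delta_2\}=\Z_3$ whenever $\delta_1,\delta_2\neq0$. However, you leave the step you call delicate unresolved, and the fact you extract, that the four values $w(f_{p,q}(G))$ are not all equal, is too weak to finish. It does not tell you which coordinate to vary. If you vary the image of $x_0$, the possible edge $x_0y_0$ (present in the induced $P_4$ case) couples the two rectangles through $w(qq')$, and you lose the product structure. As written, "varying only $p$ and $p'$ where possible" is a hope, not an argument.

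The missing observation follows from the alternating-sum identity you already have. For $q\in\{u_1,v_1\}$, let $D(q)$ be the change in copy weight when the image of $x$ moves from $a_1$ to $b_1$, with $x_0$ kept at $q$ and every other image fixed. Since $x$ has no neighbour in $\{y,y_0\}$, $D(q)$ depends only on $q$ and the frozen images of $N_G(x)\setminus\{x_0\}$. Only the edge $xx_0$ depends jointly on the images of $x$ and $x_0$, so $D(u_1)-D(v_1)=-\Delta_w(a_1,b_1;u_1,v_1)\neq0$. Hence $\delta_1=D(t_1)\neq0$ for some $t_1\in\{u_1,v_1\}$, and similarly you obtain $t_2\in\{u_2,v_2\}$ with $\delta_2\neq0$. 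Now freeze $x_0\mapsto t_1$ and $y_0\mapsto t_2$, and move only $x\in\{a_1,b_1\}$ and $y\in\{a_2,b_2\}$. Because $xy,xy_0,x_0y\notin E(G)$, the two moves do not interact, and the edge $x_0y_0$, if present, contributes the constant $w(t_1t_2)$. So the four copies have weights $z_0+\varepsilon_1\delta_1+\varepsilon_2\delta_2$ with $\varepsilon_1,\varepsilon_2\in\{0,1\}$, and your sumset fact gives a zero-sum copy. With this step supplied, your proof is exactly the paper's.
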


\begin{proof}
  For $i\in\{1,2\}$, suppose that $(a_i,b_i;u_i,v_i)$ are two vertex-disjoint nonzero rectangles.
  Set $(p_1,q_1)=(x,x_0)$ and $(p_2,q_2)=(y,y_0)$.
  Thus, for $i\in\{1,2\}$, $q_i$ is a private neighbor of $p_i$, while $p_1p_2\notin E(G)$.
  The vertices $p_i$ will be used for the two independent switches.

  Let $K=V(K_{n+4})\setminus\{a_1,b_1,u_1,v_1,a_2,b_2,u_2,v_2\}$.
  Both $K$ and $V(G)\setminus\{p_1,q_1,p_2,q_2\}$ have size $n-4$.
  Fix a bijection $\psi:V(G)\setminus\{p_1,q_1,p_2,q_2\}\to K$.
  By \cref{clm:private-neighbors}, $N_G(p_i)\setminus\{q_i\}\subseteq V(G)\setminus\{p_1,q_1,p_2,q_2\}$ for $i\in\{1,2\}$.
  Thus, for each $i\in\{1,2\}$ and $t\in\{u_i,v_i\}$, we may define
  \[
    \Delta_i(t)=
    \sum_{z\in\psi(N_G(p_i)\setminus\{q_i\})\cup\{t\}}
    \bigl(w(b_i z)-w(a_i z)\bigr).
  \]
  A direct calculation gives, for $i\in\{1,2\}$, $\Delta_i(u_i)-\Delta_i(v_i)=-\Delta_w(a_i,b_i;u_i,v_i)\ne0$.
  Hence, for each $i$, we can choose $t_i\in\{u_i,v_i\}$ such that $\Delta_i(t_i)\ne0$, and set $\delta_i=\Delta_i(t_i)$.
  Define an embedding $\sigma_0:V(G)\to V(K_{n+4})$ by $\sigma_0(p_i)=a_i$, $\sigma_0(q_i)=t_i$ for $i=1,2$ and $\sigma_0(z)=\psi(z)$ for $z\notin\{p_1,q_1,p_2,q_2\}$.
  Since the two rectangles are vertex-disjoint and $\psi$ maps into $K$, $\sigma_0$ is injective.
  Switching the image of $p_i$ from $a_i$ to $b_i$ changes the copy weight by $\delta_i$.
  Since $p_1p_2=xy\notin E(G)$, the two switches are independent.
  Let $z_0$ be the weight of the initial copy.
  For $\varepsilon_1,\varepsilon_2\in\{0,1\}$, perform the $i$-th switch precisely when $\varepsilon_i=1$.
  The resulting copy has weight $z_0+\varepsilon_1\delta_1+\varepsilon_2\delta_2$.
  Since $\delta_1,\delta_2\ne0$, the set of possible increments is $\{0,\delta_1,\delta_2,\delta_1+\delta_2\}=\Z_3$.
  Hence, one of these increments equals $-z_0$, and the corresponding copy is zero-sum, a contradiction.
\end{proof}

We now find an affine set of at least $n+2$ host vertices.
If every rectangle in $K_{n+4}$ is zero, then \cref{lem:rectangle-form} shows that $w$ is affine on all $n+4$ vertices.
Otherwise, choose a nonzero rectangle $(a,b;u,v)$ and set $U=V(K_{n+4})\setminus\{a,b,u,v\}$.
By \cref{clm:no-disjoint-nonzero-rectangles}, every rectangle on $U$ is vertex-disjoint from $(a,b;u,v)$ and hence is zero.
Since $|U|=n\ge6$, \cref{lem:rectangle-form} shows that $w$ is affine on $K_U$, and then \cref{lem:compression} gives a set $D$ with $|D|\le2$ such that $w$ is affine on $K_{V(K_{n+4})\setminus D}$.
In either case, $w$ is affine on an $(n+2)$-vertex set $C$.
We choose $c\in\Z_3$ and labels $\phi_u\in\Z_3$, for $u\in C$, such that $w(uv)=\phi_u+\phi_v+c$ for all distinct $u,v\in C$.
By \cref{lem:degree-completion}, there is an embedding $\sigma:V(G)\hookrightarrow C$ satisfying $\sum_{v\in V(G)}\deg_G(v)\cdot\phi_{\sigma(v)}=0$.
For the copy $G_\sigma$ determined by $\sigma$, we have
\[
  \begin{aligned}
    w(G_\sigma)
     & =\sum_{xy\in E(G)}w\bigl(\sigma(x)\sigma(y)\bigr)              \\
     & =\sum_{v\in V(G)}\deg_G(v)\cdot\phi_{\sigma(v)}+c\cdot e(G)=0.
  \end{aligned}
\]
Here the last equality follows from $3\mid e(G)$.
Thus, $G_\sigma$ is a zero-sum copy of $G$, contradicting our assumption.
Therefore, $R(G,\Z_3)\le n+4$ whenever $G$ contains an induced $P_4$ or an induced $2K_2$.

\section{Proof of \cref{thm:main}: When $G$ contains neither an induced $P_4$ nor an induced $2K_2$}
\label{sec:proof-main2}

Assume that $G$ contains neither an induced $P_4$ nor an induced $2K_2$.
The proof has three stages.
First, we delete suitable blocks of twins and show that a zero-sum copy of the reduced graph can be lifted back to $G$.
Second, if the reduced graph contains an \textup{(SVA)} configuration (see \cref{def:SVA}), two independent switches give the desired $n+4$ bound.
Finally, when no such configuration exists, a rooted-forest representation reduces the problem to a small family of structural forms.
For convenience, we use the convention $R(K_0,\Z_3)=0$.

\subsection{Reduction to irreducible graphs}
\label{sec:main2-reduction}

The following definition identifies the twin blocks that will be deleted; their sizes are chosen so that their edge contributions can be recovered with total weight zero in $\Z_3$.

\begin{definition}
  Distinct nonadjacent vertices $u$ and $v$ of a graph $Q$ are \emph{false twins} if $N_Q(u)=N_Q(v)$, whereas distinct adjacent vertices $u$ and $v$ are \emph{true twins} if $N_Q[u]=N_Q[v]$.
  A \emph{false-three reduction} deletes a set of three pairwise false-twin vertices, and a \emph{true-six reduction} deletes a set of six pairwise true-twin vertices.
  A graph is \emph{$3/6$-twin-reduced} if it contains neither a set of three pairwise false-twin vertices nor a set of six pairwise true-twin vertices.
\end{definition}

The following lifting proposition preserves the edge-count congruence and transfers an additive Ramsey bound from the reduced graph back to the original one.

\begin{proposition}
  \label{prop:twin-lifting}
  Let $Q=Q_m\supset Q_{m-1}\supset\cdots\supset Q_0=H$ be a sequence in which $Q_{j-1}$ is obtained from $Q_j$ by a false-three or true-six reduction for each $j\in\{1,\ldots,m\}$.
  Then $e(Q)\equiv e(H)\pmod3$.
  If $3\mid e(Q)$, then moreover
  \[
    R(Q,\Z_3)\le
    \max\bigl\{R(H,\Z_3),\,|V(Q)|+4\bigr\}.
  \]
\end{proposition}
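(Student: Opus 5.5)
We first check the congruence directly, then reduce the Ramsey bound to a single reduction step and handle the two step types separately.

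\textbf{Congruence.} Consider a false-three reduction deleting $\{t_1,t_2,t_3\}$ with common neighborhood $S$, $|S|=d$. There are no edges among the $t_i$, so exactly $3d\equiv0$ edges are removed. Now consider a true-six reduction deleting six pairwise true twins with common outside neighborhood $S$, $|S|=d$. It removes $6d+\binom62=6d+15\equiv0$ edges. Induction on $m$ gives $e(Q)\equiv e(H)\pmod3$.

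\textbf{Reduction to one step.} By induction along the chain it suffices to prove $R(Q_j,\Z_3)\le\max\{R(Q_{j-1},\Z_3),|V(Q_j)|+4\}$ for each $j$. Indeed $3\mid e(Q_{j-1})$ by the congruence, and $|V(Q_{j-1})|<|V(Q_j)|$, so the maxima telescope to $\max\{R(H,\Z_3),|V(Q)|+4\}$. We use the convention $R(K_0,\Z_3)=0$ when $Q_{j-1}$ is empty.

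\textbf{Setup for one step.} Write $Q'=Q_{j-1}$ and let $N\ge\max\{R(Q',\Z_3),|V(Q)|+4\}$. Take any $w:E(K_N)\to\Z_3$. Fix a zero-sum copy of $Q'$ given by an embedding $\varphi$, and let $T$ be the set of unused host vertices. Then $|T|\ge |V(Q)|+4-|V(Q')|$, which is $7$ for a false-three step and $10$ for a true-six step. The common neighborhood $S$ of the deleted twins lies in $V(Q')$, since deleted twins are not neighbors of one another outside the twin set. For $t\in T$ put $f(t)=\sum_{s\in S}w(t\varphi(s))$. Extending $\varphi$ by sending the deleted twins to a set $X\subseteq T$ yields a copy of $Q$ with weight $0+\sum_{t\in X}f(t)+w(K_X)$, where the last term is present only in the true case.

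\textbf{False-three step.} Here we need $X$ with $|X|=3$ and $\sum_{t\in X}f(t)=0$. Since $|T|\ge5$, \cref{thm:EGZ} with $m=3$ supplies such an $X$.

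\textbf{True-six step.} This is the only nontrivial point. We need a $6$-set $X\subseteq T$ with $w(K_X)+\sum_{t\in X}f(t)=0$. Define an auxiliary weighting on $K_T$ by $w'(st)=w(st)+2f(s)+2f(t)$. Each vertex of $K_6$ has degree $5$, so for any $6$-set $X$,
\[
w'(K_X)=w(K_X)+2\cdot5\sum_{t\in X}f(t)=w(K_X)+\sum_{t\in X}f(t)
\]
in $\Z_3$. Since $|T|\ge10=R(K_6,\Z_3)$, there is a $6$-set $X$ with $w'(K_X)=0$, which is exactly what is needed.

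In both cases we obtain a zero-sum copy of $Q$ in $K_N$, so $R(Q,\Z_3)\le N$. The step requiring care is the true-six case, where the internal edges of the twin block must be absorbed. The degree-rescaling trick converts that requirement into a zero-sum $K_6$ problem, which the value $R(K_6,\Z_3)=10$ of Harborth and Piepmeyer settles.
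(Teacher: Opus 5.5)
Your proof is correct and follows the paper's argument step for step. It uses the same edge count for the congruence, Erd\H{o}s--Ginzburg--Ziv on five unused host vertices for a false-three step, and the reweighting $w(st)+2f(s)+2f(t)$ together with $R(K_6,\Z_3)=10$ for a true-six step; the only difference is packaging, since you prove a one-step bound and telescope the maxima while the paper fixes a single host $K_N$ and extends the zero-sum embedding through the whole chain.
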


\begin{proof}
  We first check the edge-count congruence.
  For a false-three reduction $J\to L$, let $A\subseteq V(L)$ be the common neighborhood of the three deleted vertices.
  Then $e(J)-e(L)=3|A|$.
  For a true-six reduction, the six deleted vertices form a clique and have a common neighborhood $A\subseteq V(L)$, so $e(J)-e(L)=6|A|+\binom62$.
  Both differences are divisible by three.
  Summing over the reduction sequence gives $e(Q)\equiv e(H)\pmod3$.

  We now lift a zero-sum copy of $H$ through the reduction sequence.
  Assume that $3\mid e(Q)$.
  The congruence $e(Q)\equiv e(H)\pmod3$ implies $3\mid e(H)$.
  Set $N=\max\{R(H,\Z_3),|V(Q)|+4\}$ and consider an arbitrary weighting $w:E(K_N)\to\Z_3$.
  By the definition of $R(H,\Z_3)$, there is an embedding $\sigma_0:V(H)\to V(K_N)$ whose image has weight zero.
  Suppose inductively that $\sigma_{j-1}$ is a zero-sum embedding of $Q_{j-1}$.
  We construct a zero-sum extension $\sigma_j$ to $Q_j$.

  First suppose that $Q_j$ is obtained by adjoining three false twins with common neighborhood $A\subseteq V(Q_{j-1})$.
  The set $U$ of unused host vertices satisfies
  \[
    |U|=N-|V(Q_{j-1})|
    \ge |V(Q)|+4-|V(Q_{j-1})|
    \ge |V(Q_j)|+4-|V(Q_{j-1})|=7.
  \]
  For $z\in U$, define $\lambda(z)=\sum_{a\in A}w\bigl(z\sigma_{j-1}(a)\bigr)$.
  Choose any five unused vertices.
  By \cref{thm:EGZ}, three of their $\lambda$-values sum to zero.
  Mapping the false twins to the corresponding vertices adds no internal edges and adds total weight zero on the edges to $A$.
  This gives the required embedding $\sigma_j$.

  Suppose instead that $Q_j$ is obtained by adjoining six true twins.
  With $A$ and $U$ defined analogously,
  \[
    |U|\ge |V(Q)|+4-|V(Q_{j-1})|
    \ge |V(Q_j)|+4-|V(Q_{j-1})|=10.
  \]
  For $z\in U$, define $\lambda(z)=\sum_{a\in A}w\bigl(z\sigma_{j-1}(a)\bigr)$ and, on $K_U$, define the auxiliary weighting $\widetilde w(uv)=w(uv)+2\lambda(u)+2\lambda(v)$.
  Since $R(K_6,\Z_3)=10$~\cite{HarborthPiepmeyer1994}, there is a set $X$ of six vertices in $U$ with $\widetilde w(K_X)=0$.
  Each vertex of $K_X$ occurs in five edges and $2\cdot5\equiv1\pmod3$, so
  \[
    0=\widetilde w(K_X)
    =w(K_X)+2\cdot5\sum_{z\in X}\lambda(z)
    =w(K_X)+\sum_{z\in X}\lambda(z).
  \]
  The last expression is precisely the total weight of the new clique and its edges to $A$.
  Mapping the six true twins to $X$ therefore gives a zero-sum extension $\sigma_j$.
  Iterating over $j=1,\ldots,m$ produces a zero-sum copy of $Q$ in $K_N$, which implies that $R(Q,\Z_3)\le N$.
\end{proof}

Consequently, for every $C\ge4$, a bound $R(H,\Z_3)\le |V(H)|+C$ immediately lifts to $R(Q,\Z_3)\le |V(Q)|+C$.
Apply false-three and true-six reductions to $G$ until a $3/6$-twin-reduced graph $H$ is reached.
Since every reduction is a vertex deletion, $H$ remains induced $(P_4,2K_2)$-free.
Moreover, $e(H)\equiv e(G)\equiv0\pmod3$ by \cref{prop:twin-lifting}.

The next definition isolates the local configuration that supplies two independent swaps in $H$.
Its three conditions allow the contribution of each swap to be adjusted by a separate rectangle while preventing the two swaps from interfering.

\begin{definition}\label{def:SVA}
  An ordered $6$-tuple of distinct vertices
  \[
    (x_1,y_1,z_1,x_2,y_2,z_2)
  \]
  is said to satisfy the \emph{six-vertex adjacency condition}, abbreviated \textup{(SVA)}, if the following hold for $i\in[2]$:
  \begin{enumerate}[label=\textup{(\roman*)}]
    \item $z_i$ is adjacent to exactly one of $x_i$ and $y_i$ (say $z_i$ distinguishes $x_i$ and $y_i$);
    \item $z_i$ is adjacent to both or neither of $x_{3-i}$ and $y_{3-i}$;
    \item the pairs $\{x_1,y_1\}$ and $\{x_2,y_2\}$ are complete or anticomplete to each other.
  \end{enumerate}
\end{definition}

This definition gives the main dichotomy: an \textup{(SVA)} configuration settles the Ramsey bound directly, while an \textup{(SVA)}-free graph is rigid enough to admit the structural reduction below.

\medskip
\noindent\textbf{Case 1. $H$ contains six distinct vertices satisfying condition~\textup{(SVA)}.}
\medskip

Fix an \textup{(SVA)} configuration $x_1,y_1,z_1,x_2,y_2,z_2$ and let $h=|V(H)|$.
We show that $R(H,\Z_3)\le h+4$.
Suppose to the contrary that a weighting $w:E(K_{h+4})\to\Z_3$ contains no zero-sum copy of $H$.

We first show that $w$ contains no two vertex-disjoint nonzero rectangles.
The argument follows the same switching strategy as the proof of \cref{clm:no-disjoint-nonzero-rectangles}, with the \textup{(SVA)} configuration providing the two independent switches.
Indeed, suppose that $(a_i,b_i;u_i,v_i)$, for $i\in\{1,2\}$, are two such rectangles.
Let $S=\{x_1,y_1,z_1,x_2,y_2,z_2\}$ and choose an injection
\[
  \psi:V(H)\setminus S\to
  V(K_{h+4})\setminus
  \{a_1,b_1,u_1,v_1,a_2,b_2,u_2,v_2\}.
\]
The target has $h-4$ vertices and the domain has $h-6$ vertices.

Fix $i\in\{1,2\}$, set $j=3-i$, and let $t\in\{u_i,v_i\}$.
Map $x_i,y_i,z_i$ to $a_i,b_i,t$, respectively, map the vertices outside $S$ according to $\psi$, and complete the map on the other triple using vertices of the other rectangle.
Thus, in the resulting embedding $\sigma$, we have $\sigma(x_i)=a_i$, $\sigma(y_i)=b_i$, and $\sigma(z_i)=t$.
By the \emph{$i$-th swap}, we mean replacing $\sigma$ by the embedding $\sigma^{(i)}$ defined by $\sigma^{(i)}(x_i)=b_i$, $\sigma^{(i)}(y_i)=a_i$, and $\sigma^{(i)}(v)=\sigma(v)$ for $v\notin\{x_i,y_i\}$.
Thus, the $i$-th swap exchanges only the images of $x_i$ and $y_i$ and leaves every other image unchanged.

We now compute the resulting change in copy weight.
Separate the vertices outside $S$ that distinguish $x_i$ from $y_i$ into $P_i=\bigl(N_H(x_i)\setminus N_H(y_i)\bigr)\setminus S$ and $M_i=\bigl(N_H(y_i)\setminus N_H(x_i)\bigr)\setminus S$.
A vertex $v\in P_i$ changes its incident edge weight from $w(a_i\psi(v))$ to $w(b_i\psi(v))$, whereas a vertex $v\in M_i$ produces the opposite change.
Vertices outside $S$ that are adjacent to both or neither of $x_i,y_i$ produce no change.
Hence, the total contribution from the vertices outside $S$ is the fixed quantity
\[
  c_i=
  \sum_{v\in P_i}\bigl(w(b_i\psi(v))-w(a_i\psi(v))\bigr)
  +\sum_{v\in M_i}\bigl(w(a_i\psi(v))-w(b_i\psi(v))\bigr).
\]

Condition~\textup{(SVA)(ii)} implies that the edges from $z_j$ to $\{x_i,y_i\}$ contribute no change under this swap, while condition~\textup{(SVA)(iii)} gives the same conclusion for the edges joining the two distinguished pairs.
The edge $x_i y_i$, if present, is also unchanged.
Define $\eta_i=1$ if $z_i$ is adjacent to $x_i$, and $\eta_i=-1$ if $z_i$ is adjacent to $y_i$.
Therefore, the change in copy weight under the $i$-th swap depends on $t$ only through the edge incident with $z_i$ and is
\[
  \Delta_i(t)=c_i+\eta_i\bigl(w(b_i t)-w(a_i t)\bigr),
\]
and consequently
\[
  \Delta_i(u_i)-\Delta_i(v_i)
  =-\eta_i\Delta_w(a_i,b_i;u_i,v_i)\ne0.
\]
Thus, $\Delta_i(u_i)$ and $\Delta_i(v_i)$ are distinct, so at least one of them is nonzero.
Choose $t_i\in\{u_i,v_i\}$ such that $\Delta_i(t_i)\ne0$, and set $\delta_i=\Delta_i(t_i)$.
Using these choices for both $i=1,2$, define the embedding $\sigma_0:V(H)\to V(K_{h+4})$ by $\sigma_0(x_i)=a_i$, $\sigma_0(y_i)=b_i$, and $\sigma_0(z_i)=t_i$ for $i\in\{1,2\}$, and by $\sigma_0(v)=\psi(v)$ for $v\in V(H)\setminus S$.
The disjointness of the rectangles and the choice of $\psi$ ensure that $\sigma_0$ is injective.
For $\varepsilon_1,\varepsilon_2\in\{0,1\}$, let $\sigma_{\varepsilon_1,\varepsilon_2}$ be the embedding obtained from $\sigma_0$ by performing the $i$-th swap precisely when $\varepsilon_i=1$.
Since the effects of the two swaps are independent and additive, if $z_0$ is the weight of the copy defined by $\sigma_0$, then the copy defined by $\sigma_{\varepsilon_1,\varepsilon_2}$ has weight $z_0+\varepsilon_1\delta_1+\varepsilon_2\delta_2$.
The four possible increments are therefore $0$, $\delta_1$, $\delta_2$, and $\delta_1+\delta_2$.
Again, they contain every element of $\Z_3$.
Hence, one increment equals $-z_0$, and the corresponding embedding gives a zero-sum copy of $H$, a contradiction.
Thus, $w$ contains no two vertex-disjoint nonzero rectangles.
\medskip

This exclusion yields a large affine set.
If every rectangle in $K_{h+4}$ is zero, then \cref{lem:rectangle-form} makes the whole weighting affine.
Otherwise, fix a nonzero rectangle and let $U$ be the set of the remaining $h$ vertices.
Every rectangle on $U$ is disjoint from the fixed one and is therefore zero, since $w$ contains no two vertex-disjoint nonzero rectangles.
Thus, \cref{lem:rectangle-form} makes $K_U$ affine, and \cref{lem:compression} extends it to an affine set of size at least $h+2$.
In either case, there is a set $C$ of $h+2$ host vertices on which $w(uv)=\phi_u+\phi_v+c$ for distinct $u,v\in C$ and suitable labels $\phi_u,c\in\Z_3$.

By \cref{lem:degree-completion}, there is an embedding $\sigma:V(H)\to C$ satisfying $\sum_{v\in V(H)}\deg_H(v)\cdot\phi_{\sigma(v)}=0$.
For the copy $H_\sigma$ determined by $\sigma$, we have
\[
  \begin{aligned}
    w(H_\sigma)
     & =\sum_{xy\in E(H)}w\bigl(\sigma(x)\sigma(y)\bigr)              \\
     & =\sum_{v\in V(H)}\deg_H(v)\cdot\phi_{\sigma(v)}+c\cdot e(H)=0.
  \end{aligned}
\]
This contradicts the choice of $w$.
Thus, $R(H,\Z_3)\le h+4$, and \cref{prop:twin-lifting} yields $R(G,\Z_3)\le |V(G)|+4$.

\medskip
\noindent\textbf{Case 2. $H$ contains no six distinct vertices satisfying condition~\textup{(SVA)}.}
\medskip

In this case, the preceding \textup{(SVA)}-based switching argument is unavailable.
We collect the four properties that remain after all reductions into the following notion.

A graph $Q$ is \emph{irreducible} if it is induced $(P_4,2K_2)$-free, $3/6$-twin-reduced, \textup{(SVA)}-free, and satisfies $3\mid e(Q)$.
Thus, $H$ is irreducible.
If $H=K_0$, then \cref{prop:twin-lifting} and the convention $R(K_0,\Z_3)=0$ give $R(G,\Z_3)\le |V(G)|+4$.
Hence, we may assume that $H$ is nonempty.
We derive the structural forms needed for all such graphs in \cref{sec:main2-characterization} and establish the Ramsey bounds needed to finish this case in \cref{sec:main2-irreducible-bounds}, with the two exceptional proofs given in Appendices~\ref{app:k5minus-bound} and~\ref{app:k222-bound}.

\subsection{Structural reduction of irreducible graphs}
\label{sec:main2-characterization}

Our aim in this subsection is to extract from the rooted-forest representation only the structural forms needed for the zero-sum Ramsey bounds.
The following representation theorem provides the rooted-forest model for $\overline H$.
It turns nonedges of $H$ into ancestor relations in a rooted forest.

\begin{theorem}[Golumbic \cite{Golumbic1978}]
  \label{thm:trivially-perfect}
  A graph $Q$ is induced $\{P_4,C_4\}$-free if and only if there is a rooted forest $F$ on $V(Q)$ such that two distinct vertices are adjacent in $Q$ exactly when one is an ancestor of the other in $F$.
\end{theorem}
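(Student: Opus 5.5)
We prove the two directions of \cref{thm:trivially-perfect} separately, the forward direction by induction on $|V(Q)|$.

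\emph{Converse direction.} Suppose the rooted forest $F$ represents $Q$ as stated, so that $Q$ is the comparability graph of the ancestor order of $F$. In $F$, the ancestors of any vertex form a chain, namely its root path.

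We first exclude an induced $P_4=abcd$. Consider the middle edge $bc$ and say $b$ is an ancestor of $c$. If $a$ were comparable with $c$, it would be adjacent to $c$, which is false. Now $a$ is comparable with $b$. If $a$ were an ancestor of $b$, it would also be an ancestor of $c$, which we have ruled out. So $a$ is a descendant of $b$ in a branch different from that of $c$. Next, $d$ is comparable with $c$ but not with $b$. Every ancestor of $c$ is comparable with $b$, since both lie on the root path of $c$. Hence $d$ is a descendant of $c$, and then $d$ is a descendant of $b$, so $d$ is adjacent to $b$. This contradicts the fact that $bd$ is a nonedge.

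We next exclude an induced $C_4=abcd$. Among its four vertices, take one of minimal depth, say $a$. Its neighbors $b$ and $d$ are then descendants of $a$. The vertex $c$ is comparable with both $b$ and $d$, while $b$ and $d$ are incomparable. If $c$ were a descendant of $b$ or of $d$, it would be a descendant of $a$, making $ac$ an edge. So $c$ is an ancestor of both $b$ and $d$. Then $c$ and $a$ both lie on the root path of $b$, so they are comparable and $ac$ is an edge, a contradiction.

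\emph{Forward direction.} We induct on $|V(Q)|$. If $Q$ is disconnected, we take the disjoint union of the forests built for its components. If $Q$ is connected, the key step is to show that $Q$ has a \emph{universal} vertex, that is, a vertex adjacent to all others. Given such a vertex $r$, we represent $Q-r$ by a forest using induction and attach all of its roots as children of a new root $r$. This is consistent because $r$ is comparable with every vertex, exactly as $r$ is adjacent to every vertex.

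\emph{Main obstacle: finding the universal vertex.} We choose $v$ of maximum degree and suppose some $u$ is not adjacent to $v$. Since $Q$ is connected, take a shortest path from $v$ to $u$. Its first three vertices $v,p,q$ satisfy $vq\notin E(Q)$. Maximality of $\deg v$, together with $q\in N(p)\setminus N(v)$, gives $\deg p\le\deg v$, so some $s\in N(v)\setminus N[p]$ exists. We now examine the edge $sq$.
\begin{itemize}
\item If $s$ and $q$ are nonadjacent, then $s\,v\,p\,q$ is an induced $P_4$.
\item If $s$ and $q$ are adjacent, then $v\,p\,q\,s$ is an induced $C_4$.
\end{itemize}
Both contradict the hypothesis, so $v$ is universal. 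The degree comparison is the only delicate point, and it is what forces the witness $s$ to exist.
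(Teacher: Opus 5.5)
Your proof is correct. The paper gives no proof of this statement: it quotes Golumbic, and it uses only the forward implication. That implication is applied to $\overline H$, which is induced $\{P_4,C_4\}$-free because $H$ is induced $\{P_4,2K_2\}$-free, to obtain the representing forest in \cref{sec:main2-characterization}.

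Your argument is a self-contained proof along the standard lines. The converse is a direct check on the ancestor order. The forward direction rests on the fact that every connected induced $\{P_4,C_4\}$-free graph has a universal vertex, which becomes the root of its tree.

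The step you flag as delicate is sound, and you could state the counting explicitly:
\[
|N(v)\setminus\{p\}|=\deg v-1\ge\deg p-1=|N(p)\setminus\{v\}|,
\]
and $q\in N(p)\setminus N(v)$. Hence $N(v)\setminus\{p\}\not\subseteq N(p)$, which produces $s$. Moreover $s\ne q$ because $s\in N(v)$, so $s,v,p,q$ are four distinct vertices as the case analysis requires.

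One small simplification is available in excluding $P_4$. The discussion of $a$ is unnecessary: once $b$ is an ancestor of $c$, every vertex comparable with $c$ is comparable with $b$. So $d$ alone gives the contradiction, and the case where $c$ is an ancestor of $b$ is symmetric.

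Compared with the paper's citation, your version makes this step of the paper self-contained at the cost of about a paragraph. Your converse direction is extra relative to what the paper needs. The citation buys brevity for a classical result.
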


Two distinct vertices of a rooted forest are called \emph{comparable} if one is an ancestor of the other, and \emph{incomparable} otherwise.
Because $P_4$ is self-complementary and $\overline{2K_2}=C_4$, the complement $\overline H$ is induced $\{P_4,C_4\}$-free.
Hence, \cref{thm:trivially-perfect} gives a rooted forest $F$ on $V(H)$ in which two distinct vertices are adjacent in $H$ precisely when they are incomparable in $F$.

All rooted-forest terminology refers to the orientation away from the roots.
A vertex of a rooted forest is called \emph{branching} if it has at least two children, \emph{unary} if it has exactly one child, and a \emph{leaf} if it has no children.
A \emph{root--leaf path} is a path from the root of a component to one of its leaves.
The following lemma is the structural core of the reduction: it translates the twin restrictions and the absence of an \textup{(SVA)} configuration into four constraints that force $F$ to have at most two branching vertices.

\begin{lemma}
  \label{lem:forest-restrictions}
  Let $H$ be an irreducible graph, and let $F$ be a rooted forest on $V(H)$ such that $uv\in E(H)$ if and only if $u$ and $v$ are incomparable in $F$.
  Then the following properties hold.
  \begin{enumerate}[label=\textup{(\roman*)}]
    \item Every component of $F$ without a branching vertex has at most two vertices.
    \item Every vertex of $F$ has at most five leaf children.
    \item No root--leaf path contains two consecutive unary vertices.
    \item The forest $F$ has at most two branching vertices.
          If it has two, then one is the parent of the other.
  \end{enumerate}
\end{lemma}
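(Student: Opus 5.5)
The plan is to translate each of the forbidden configurations (twin sets, \textup{(SVA)}) into the language of the rooted forest $F$. For a vertex $v$ write $\mathrm{comp}(v)$ for the set of vertices comparable with $v$, namely its ancestors and descendants. Then $N_H(v)=V(H)\setminus(\mathrm{comp}(v)\cup\{v\})$. So two vertices are false (resp.\ true) twins in $H$ exactly when they are comparable (resp.\ incomparable) and have the same comparable set after removing each other. The four items then come from two basic twin computations and two explicit \textup{(SVA)} constructions.

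\emph{Items (ii), (iii) and (i).} For (ii), two leaf children $\ell,\ell'$ of a common parent $p$ are incomparable, hence adjacent. Both have comparable set equal to the ancestors of $p$ together with $p$, so $N_H[\ell]=N_H[\ell']$ and they are true twins. Six leaf children of one vertex would therefore be six pairwise true twins, contradicting $3/6$-twin-reducedness.

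For (iii), let $u$ be unary with child $v$. Then $\mathrm{comp}(u)\setminus\{v\}$ and $\mathrm{comp}(v)\setminus\{u\}$ both equal the ancestors of $u$ together with the descendants of $v$, so $u$ and $v$ are false twins. Now suppose $u,v$ are consecutive unary vertices and $w$ is the child of $v$. The same computation shows that $u,v,w$ are pairwise false twins, since each comparable set, minus the other two, equals the ancestors of $u$ together with the descendants of $w$. This is a forbidden false-three set.

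Item (i) then follows from (iii). A component with no branching vertex is a chain $r=v_1,\dots,v_k$ in which $v_1,\dots,v_{k-1}$ are unary, so (iii) forces $k-1\le1$.

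\emph{Item (iv).} I would use \textup{(SVA)}-freeness via the following observation. If $b$ is branching with two children $c,d$, then the triple $(x,y,z)=(b,c,d)$ satisfies \textup{(SVA)(i)}: $d$ is comparable with $b$ but incomparable with $c$, so $z$ is adjacent to $y$ only. Given two branching vertices $b_1,b_2$ with such triples $T_1,T_2$, I split into two cases.

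\begin{itemize}
\item If $b_1,b_2$ are incomparable, every vertex of $T_1$ lies in the subtree of $b_1$ and is incomparable with every vertex of $T_2$. So $T_1$ is complete to $T_2$, which gives (ii) and (iii) of \textup{(SVA)}.
\item If $b_1$ is a proper ancestor of $b_2$ but not its parent, let $c_1$ be the child of $b_1$ that is a proper ancestor of $b_2$ (so $c_1\ne b_2$), and take $d_1$ another child of $b_1$. Then $T_2$ lies inside the subtree of $c_1$, so $\{x_1,y_1\}=\{b_1,c_1\}$ is anticomplete to $\{x_2,y_2\}$ and $z_2=d_2$ is adjacent to neither $b_1$ nor $c_1$. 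Meanwhile $z_1=d_1$ is incomparable with everything in the subtree of $c_1$, hence adjacent to both $x_2,y_2$.
\end{itemize}

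In each case the six vertices are distinct, giving an \textup{(SVA)} configuration, which is a contradiction. Hence any two branching vertices are in a parent--child relation. Three branching vertices would have to be pairwise parent--child, which is impossible in a forest, so there are at most two.

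The main obstacle I expect is this last step: choosing the triples so that all three \textup{(SVA)} conditions hold simultaneously. In particular, in the ancestor case one must ensure that $c_1\neq b_2$ and that the descendant triple sits entirely below $c_1$; this is exactly where the exception ``one is the parent of the other'' arises.
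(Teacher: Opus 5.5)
Your proof is correct and follows the paper's argument essentially step for step. The same twin computations give (ii) and (iii), and for (iv) you use exactly the paper's two \textup{(SVA)} tuples: one for incomparable branching vertices, and one for an ancestor that is not the parent. The only deviation is that you deduce (i) from (iii), whereas the paper observes directly that any three vertices of a path component $C$ are pairwise comparable and share the open neighborhood $V(H)\setminus V(C)$; both routes are valid.
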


\begin{proof}
  A component with no branching vertex is a rooted path, so its vertices are pairwise comparable and hence form an independent set in $H$.
  If such a component $C$ has at least three vertices, then any three vertices of $C$ have the same open neighborhood $V(H)\setminus V(C)$ and hence are false twins, contradicting the irreducibility of $H$.
  This proves \textup{(i)}.
  Similarly, six leaf children of one vertex are pairwise incomparable and have identical closed neighborhoods in $H$; hence, they are true twins, again contradicting the irreducibility of $H$.
  This proves \textup{(ii)}.
  Next, suppose that $a,b$ are consecutive unary vertices.
  Since $b$ is unary, it has a unique child $c$, and $a,b,c$ occur consecutively on a root--leaf path.
  Every other vertex is then either comparable with all three or incomparable with all three.
  Thus, $N_H(a)=N_H(b)=N_H(c)$, so $\{a,b,c\}$ is a set of false twins.
  This contradicts the irreducibility of $H$ and proves \textup{(iii)}.

  It remains to prove \textup{(iv)}.
  We first show that all branching vertices are comparable.
  Suppose that branching vertices $p_1,p_2$ are incomparable.
  For $i\in[2]$, choose distinct children $a_i,b_i$ of $p_i$.
  The tuple
  \[
    (p_1,a_1,b_1,p_2,a_2,b_2)
  \]
  satisfies \textup{(SVA)}: within each triple, $b_i$ distinguishes $p_i$ from $a_i$, while every vertex in one triple is adjacent to every vertex in the other, since the vertices in different triples are incomparable in $F$.
  This contradicts the irreducibility of $H$, so all branching vertices are comparable and therefore lie on one root--leaf path.

  Now let $p,q$ be branching vertices with $p$ an ancestor of $q$.
  If $p$ is not the parent of $q$, let $c$ be the child of $p$ toward $q$, choose another child $d$ of $p$, and choose distinct children $a,b$ of $q$.
  Then
  \[
    (p,c,d,q,a,b)
  \]
  satisfies \textup{(SVA)}: $d$ distinguishes $p,c$, and $b$ distinguishes $q,a$.
  Moreover, every vertex of $\{p,c\}$ is comparable with every vertex of $\{q,a\}$, whereas $d$ is incomparable with both $q,a$ and $b$ is comparable with both $p,c$.
  Thus, the two distinguished pairs are anticomplete, $d$ is adjacent to both vertices of the second pair, and $b$ is adjacent to neither vertex of the first pair, verifying \textup{(SVA)(ii)--(iii)}.
  This contradicts the irreducibility of $H$ and shows that, for any two branching vertices, one must be the parent of the other.
  In particular, three branching vertices cannot lie on the same root--leaf path, since the first cannot be the parent of the third.
  Hence, $F$ has at most two branching vertices.
\end{proof}

Now, we charcterize the structure of nonempty irreducible graphs.

\begin{theorem}
  \label{thm:irreducible-classification}
  Let $H$ be a nonempty irreducible graph.
  Then one of the following holds.
  \begin{enumerate}[label=\textup{(\roman*)}]
    \item $H\in\{K_1,2K_1\}$.
    \item There are $q_0\in\{0,1,2\}$ and $J\in\{K_3,K_4,K_5-e,K_{2,2,2}\}$ such that $H=J\cup q_0K_1$.
    \item There are $q_0\in\{0,1,2\}$, a nonempty graph $A$, an integer
          $t\in\{1,2\}$, and $F_0\in\{P_3,C_4\}\cup\{K_s:s\ge2\}$ such that $H=\bigl(A+(tK_1\cup F_0)\bigr)\cup q_0K_1$.
  \end{enumerate}
\end{theorem}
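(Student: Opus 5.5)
The plan is to read the structure of $H$ directly off the rooted forest $F$ given by \cref{thm:trivially-perfect}, using the four constraints of \cref{lem:forest-restrictions}.

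\textbf{Step 1: components of $F$ and isolated vertices.} Vertices in different components of $F$ are incomparable, so $H$ is the join of the graphs $H[V(C)]$ over the components $C$ of $F$. A vertex is isolated in $H$ exactly when it is comparable with every other vertex. That happens only if $F$ is a single tree and the vertex lies on the trunk: the rooted path from the root down to, and including, the topmost branching vertex $p$, or the whole tree if there is no branching vertex. These trunk vertices will form the summand $q_0K_1$.

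\textbf{Step 2: the cases with $F$ branching-free, and the bound on $q_0$.} If no component of $F$ branches, then \cref{lem:forest-restrictions}(i) says every component has one or two vertices, so $H$ is a join of copies of $K_1$ and $2K_1$. Two components with two vertices each would contain an induced $2K_2$ only in the complement, so joins such as $K_{2,2,\dots}$ are allowed. Here the false-three and true-six restrictions bound the number of $K_1$'s (they are true twins) and the number of $2K_1$'s. I would then use $3\mid e(H)$ to sort the survivors into case (i), into $K_3$, $K_4$, $K_{2,2,2}$ or $K_5-e$ in case (ii), or into the form of case (iii). When $F$ is a single tree with a branching vertex, the trunk has length at most two by \cref{lem:forest-restrictions}(iii), together with the false-twin restriction applied to trunk vertices. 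This gives $q_0\le2$.

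\textbf{Step 3: the graph left after removing the trunk.} After deleting the trunk, $H$ minus its isolated vertices is the join, over the children $c$ of $p$, of the graphs induced by the subtrees $T_c$. By \cref{lem:forest-restrictions}(iv), at most one child $q$ of $p$ is branching, and no other subtree contains a branching vertex. Every branching-free subtree $T_c$ is a path. By (iii) and the false-twin restriction, it has one or two vertices and contributes $K_1$ or $2K_1$; the leaf children of $p$, at most five by (ii), together contribute a clique.

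If $q$ exists, $T_q$ has $q$ as a trunk-like isolated vertex. Its children are leaves, giving a clique $K_s$, or paths of length two, giving pieces $2K_1$. No deeper branching occurs, by (iv). So $H[T_q]$ is $K_1\cup(\text{join of these pieces})$.

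I would take $A$ to be the join of the pieces coming from the children $c\ne q$, and read off $tK_1\cup F_0$ from $T_q$. The isolated $q$, together possibly with its unary chain, gives $tK_1$ with $t\le2$. The join of the children of $q$ gives $F_0$: $K_s$ from leaves, $P_3$ from one $2K_1$ plus one leaf, and $C_4$ from two $2K_1$'s. More than two $2K_1$'s, or a $2K_1$ together with several leaves, is excluded by a direct (SVA) check using $p$, $q$ and the children of $q$. If $q$ does not exist, $H$ minus isolated vertices is again a join of $K_1$'s and $2K_1$'s, as in Step 2.

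\textbf{Main obstacle.} I expect the hardest part to be this last bookkeeping: showing that the children of $q$, and the pieces forming $A$, produce only the listed $F_0$ and the listed exceptional graphs $J$. This will need (SVA) tuples built from two distinct $2$-vertex paths, plus the edge count modulo $3$, to exclude the remaining small joins (for instance $C_4$ alone, with four edges). My first attempt would be to list all joins of at most five $K_1$'s and at most a few $2K_1$'s, and eliminate each either by exhibiting an (SVA) tuple or by checking its edge count modulo three.
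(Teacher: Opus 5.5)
\textbf{Missing case.} The proposal never treats a forest $F$ that has a branching component \emph{together with} further components. Your Step~2 assumes no component branches. Your Step~3 assumes $F$ is a single tree, which is the only situation in which the trunk is isolated in $H$ and $H$ minus the trunk is the join over the children of $p$.

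Let $X$ be the union of the other components, $P$ the trunk of the branching tree, and $L$ the rest of that tree. Then no vertex of $H$ is isolated and $H=H[X]+\bigl(P\cup H[L]\bigr)$. This is exactly where conclusion~(iii) with $q_0=0$ and $A=H[X]$ comes from. Divisibility cannot restrict $H[L]$ here, because $e(H)$ also involves $|X|$ and $e(H[X])$.

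Write $u_i$ for the leaf children of $p$ and $pv_jw_j$ for its length-two arms, and let $e\in X$. With one branching vertex, the \textup{(SVA)} tuples $(e,u_1,p,u_2,v_1,w_1)$, $(e,u_1,p,v_1,v_2,w_1)$ and $(e,v_1,p,v_2,v_3,w_2)$ force $H[L]\in\{K_r:r\ge2\}\cup\{P_3,C_4\}$. With two branching vertices, take $v_1,v_2$ children of $q$ and $u$ a side child of $p$; the tuple $(q,v_1,v_2,u,e,p)$ shows that no external component can exist at all.

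The case is not vacuous. The graph $K_3+\bigl(K_1\cup K_{2,2,2}\bigr)$ has $36$ edges, is $3/6$-twin-reduced and induced $(P_4,2K_2)$-free, and fits none of (i)--(iii). Only the tuple $(e,v_1,p,v_2,v_3,w_2)$ excludes it.

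\textbf{Wrong source for the bound on $2K_1$ pieces.} In Step~2 you say the twin restrictions bound the number of $2K_1$ pieces. They do not, because the two vertices of such a piece are false twins only of each other. For example, $K_{2,2,2,2}$ and $K_{2,2,2,1}$ (with $24$ and $18$ edges) are $3/6$-twin-reduced, induced $(P_4,2K_2)$-free, and have edge counts divisible by three. Writing $q_iq_i'$ for the two-vertex components and $u_1$ for a singleton, they are excluded only by \textup{(SVA)}, namely $(q_1,q_2,q_1',q_3,q_4,q_3')$ and $(q_1,q_2,q_1',q_3,u_1,q_3')$. The analogous tuples $(v_1,v_2,w_1,v_3,v_4,w_3)$ and $(v_1,u_1,w_1,v_2,v_3,w_2)$ are needed for the arms below $p$ when $F$ is a single tree; $K_1\cup K_{2,2,2,2}$ is an instance.

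\textbf{Smaller points in the two-branching case.} Here $t=1$ always, since $q$ is a child of $p$ and there is no unary chain. Also, $q$ has no length-two arm at all: with $u$ a side child of $p$, the tuple $(p,q,u,v_1,v_2,w_1)$ satisfies \textup{(SVA)}, so $F_0=K_k$. This tuple needs $u$, not just $p$, $q$ and the descendants of $q$.
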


\begin{proof}
  Choose a rooted forest $F$ representing $H$.
  By \cref{lem:forest-restrictions} (iv), the forest $F$ has $0$, $1$, or $2$ branching vertices.

  \medskip
  \noindent\textbf{Case 1. $F$ has no branching vertex.}
  \medskip

  By \cref{lem:forest-restrictions} (i), every component of $F$ has at most two vertices.
  Let $a$ and $b$ be the numbers of two-vertex and singleton components of $F$, respectively.
  The singleton components are pairwise true twins in $H$, so $b\le5$.
  Writing the $i$th two-vertex component as $q_iq_i'$ and the $j$th singleton component as $u_j$, the only nonedges of $H$ are the pairs $q_iq_i'$.
  The tuples
  \[
    (q_1,q_2,q_1',q_3,q_4,q_3');\qquad
    (q_1,q_2,q_1',q_3,u_1,q_3');\qquad
    (q_1,u_1,q_1',q_2,u_2,q_2')
  \]
  satisfy \textup{(SVA)} when, respectively, $a\ge4$, $a=3$ and $b\ge1$, or $a=2$ and $b\ge2$.
  Since $H$ is \textup{(SVA)}-free, we have
  \[
    a\le1;\qquad a=2\text{ and }b\le1;\qquad\text{or}\qquad a=3\text{ and }b=0.
  \]
  Moreover,
  \[
    e(H)=\binom{2a+b}{2}-a.
  \]
  If $a=0$, the conditions $1\le b\le5$ and $3\mid\binom b2$ give $b\in\{1,3,4\}$.
  If $a=1$, the conditions $0\le b\le5$ and
  $3\mid\bigl(\binom{b+2}{2}-1\bigr)$ give $b\in\{0,3\}$.
  Neither value $b\in\{0,1\}$ works when $a=2$, while $(a,b)=(3,0)$ gives $e(H)=12$.
  Thus,
  \[
    H\in\{K_1,2K_1,K_3,K_4,K_5-e,K_{2,2,2}\},
  \]
  as required.

  \medskip
  \noindent\textbf{Case 2. $F$ has one branching vertex.}
  \medskip

  Let $p$ be the unique branching vertex.
  By \cref{lem:forest-restrictions} (i) and (iii), the vertex $p$ has at most one ancestor, every arm below $p$ has length at most two, and every other component has order at most two.
  Let $a\in\{0,1\}$ indicate whether $p$ has an ancestor; let $r$ and $s$ count the arms of lengths one and two; and let $b$ and $c$ count the external components of orders one and two.
  Then $r+s\ge2$ and $r\le5$.
  Denote the length-one arms by $u_i$, the length-two arms by $pv_jw_j$, the singleton external components by $e_i$, and the two-vertex external components by $d_jd_j'$.

  Let $P$ consist of $p$ and its optional ancestor, let $L$ be the set of vertices in the arms, and let $X$ be the union of the external components.
  Then $P$ is anticomplete to $L$, both $P$ and $L$ are complete to $X$, and
  \[
    H[L]=K_{r+2s}-sK_2.
  \]
  The three tuples
  \[
    (v_1,v_2,w_1,v_3,v_4,w_3);\qquad
    (v_1,u_1,w_1,v_2,v_3,w_2);\qquad
    (v_1,u_1,w_1,v_2,u_2,w_2)
  \]
  satisfy \textup{(SVA)} when, respectively, $s\ge4$, $r\ge1$ and $s\ge3$, or $r\ge2$ and $s\ge2$.
  Hence none of these three parameter conditions can occur.

  Suppose first that $X\ne\emptyset$, and fix $e\in X$.
  The tuples
  \[
    (e,u_1,p,u_2,v_1,w_1);\qquad
    (e,u_1,p,v_1,v_2,w_1);\qquad
    (e,v_1,p,v_2,v_3,w_2)
  \]
  satisfy \textup{(SVA)} when, respectively, $r\ge2$ and $s\ge1$, $r\ge1$ and $s\ge2$, or $s\ge3$.
  Since $r+s\ge2$, it follows that
  \[
    H[L]\in\{K_r:r\ge2\}\cup\{P_3,C_4\}.
  \]
  Indeed, the three possibilities are $s=0$, $(r,s)=(1,1)$, and $(r,s)=(0,2)$.
  Taking $A=H[X]$, $t=|P|=1+a$, $F_0=H[L]$, and $q_0=0$ gives conclusion~\textup{(iii)}.

  Now suppose that $X=\emptyset$.
  The restrictions above leave
  \[
    \begin{array}{c|c}
      s & \text{possible values of }r \\ \hline
      0 & 2\le r\le5                  \\
      1 & 1\le r\le5                  \\
      2 & r\in\{0,1\}                 \\
      3 & r=0.
    \end{array}
  \]
  Since $P$ is independent and anticomplete to $L$,
  \[
    e(H)=e(H[L])=\binom{r+2s}{2}-s.
  \]
  For $s=0$, divisibility by three gives $r\in\{3,4\}$; for $s=1$, it gives $r=3$; for $s=2$, neither $r=0$ nor $r=1$ works; and for $(r,s)=(0,3)$ the edge number is $12$.
  Consequently,
  \[
    H[L]\in\{K_3,K_4,K_5-e,K_{2,2,2}\}.
  \]
  Since $P=(1+a)K_1$, conclusion~\textup{(ii)} follows with $q_0=1+a\in\{1,2\}$.

  \medskip
  \noindent\textbf{Case 3. $F$ has two branching vertices.}
  \medskip

  By \cref{lem:forest-restrictions} (iv), the two branching vertices, $p$ and $q$, are adjacent in $F$; let $p$ be the parent of $q$.
  As in the preceding case, $p$ has at most one ancestor, every side arm of $p$ and every arm below $q$ has length at most two, and every other component has order at most two.
  Let $a\in\{0,1\}$ indicate whether $p$ has an ancestor; let $r,s$ count the side arms of $p$ of lengths one and two; let $k,\ell$ count the arms below $q$ of lengths one and two; and let $b,c$ count the external components of orders one and two.
  Then $r+s\ge1$, $k+\ell\ge2$, and $2\le k\le5$ whenever $\ell=0$.

  Choose children $v_1,v_2$ of $q$ in distinct arms and a side-arm child $u$ of $p$.
  The tuples
  \[
    (q,v_1,v_2,u,e,p);\qquad
    (p,q,u,v_1,v_2,w_1);\qquad
    (q,v_1,v_2,u_1,u_2,x_1)
  \]
  satisfy \textup{(SVA)} when, respectively, an external vertex $e$ is present, $qv_1w_1$ is a length-two arm below $q$, or $pu_1x_1$ is a length-two side arm of $p$ and $u_2$ is a child in another side arm.
  Therefore $b=c=0$, $\ell=0$, and either $s=0$ or $(r,s)=(0,1)$.
  In the latter case,
  \[
    e(H)=\binom{k}{2}+2(k+1)\in\{7,11,16,22\}
    \qquad(2\le k\le5),
  \]
  contradicting $3\mid e(H)$.
  Hence $s=\ell=0$.

  Let $I$ consist of $p$ and its optional ancestor, let $U$ be the set of the $r$ side leaves of $p$, and let $V$ be the set of the $k$ leaves below $q$.
  Then
  \[
    H=(1+a)K_1\cup\bigl(K_r+(K_1\cup K_k)\bigr).
  \]
  Taking $A=K_r$, $t=1$, $F_0=K_k$, and $q_0=1+a\in\{1,2\}$ gives conclusion~\textup{(iii)}.
\end{proof}

\subsection{zero-sum Ramsey bounds for the irreducible graphs}
\label{sec:main2-irreducible-bounds}

The following lemma reduces the finite verification to graphs without isolated vertices by proving that adjoining each isolated vertex increases the required host order by at most one.

\begin{lemma}
  \label{lem:add-isolates}
  For every graph $J$ with $3\mid e(J)$ and every integer $t\ge0$,
  \[
    R(J\cup tK_1,\Z_3)
    \le
    \max\bigl\{R(J,\Z_3),\,|V(J)|+t\bigr\}.
  \]
\end{lemma}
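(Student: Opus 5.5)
This is essentially immediate: isolated vertices contribute no edges, so any zero-sum copy of $J$ extends to a zero-sum copy of $J\cup tK_1$ as soon as the host has room for the extra vertices. Note first that $e(J\cup tK_1)=e(J)$, so $3\mid e(J\cup tK_1)$ and the Ramsey number on the left is defined. (The convention $R(K_0,\Z_3)=0$ covers the degenerate case $J=K_0$.)

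Set
\[
  N=\max\bigl\{R(J,\Z_3),\,|V(J)|+t\bigr\}
\]
and fix an arbitrary weighting $w:E(K_N)\to\Z_3$. The argument then runs as follows.

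Since $N\ge R(J,\Z_3)$, restrict $w$ to any $R(J,\Z_3)$-vertex subset, or simply use $K_N$ directly. This yields an embedding $\sigma:V(J)\hookrightarrow V(K_N)$ whose image copy $J_\sigma$ satisfies $w(J_\sigma)=0$.

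Since $N\ge|V(J)|+t$, at least $t$ host vertices lie outside $\sigma(V(J))$. Map the $t$ isolated vertices injectively onto such unused vertices. This extends $\sigma$ to an injection of $V(J\cup tK_1)$.

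The added vertices carry no edges, so the resulting copy of $J\cup tK_1$ has exactly the edge set of $J_\sigma$, and hence weight $w(J_\sigma)=0$.

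Because $w$ was arbitrary, every weighting of $K_N$ contains a zero-sum copy of $J\cup tK_1$, so $R(J\cup tK_1,\Z_3)\le N$.

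There is no real obstacle here. The only point needing care is that zero-sum Ramsey numbers are not in general monotone under taking subgraphs, so one must check that extending by isolated vertices truly leaves the weight unchanged. It does, because no edges are added.
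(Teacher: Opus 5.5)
Your proof is correct and is the same as the paper's. You use $N\ge R(J,\Z_3)$ to get a zero-sum copy of $J$, then map the $t$ isolated vertices to unused host vertices, which exist because $N\ge |V(J)|+t$, so no edges and no weight are added.
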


\begin{proof}
  Set $N=\max\bigl\{R(J,\Z_3),\,|V(J)|+t\bigr\}$ and consider an arbitrary weighting of $K_N$.
  Since $N\ge R(J,\Z_3)$, the weighting contains a zero-sum copy of $J$.
  This copy uses $|V(J)|$ host vertices, while $N-|V(J)|\ge t$.
  Mapping the $t$ isolated vertices to any $t$ unused host vertices gives a zero-sum copy of $J\cup tK_1$.
\end{proof}

We now apply the structural description from \cref{thm:irreducible-classification}.
If $H\in\{K_1,2K_1\}$, then $R(H,\Z_3)=|V(H)|$, and \cref{prop:twin-lifting} gives $R(G,\Z_3)\le |V(G)|+4$.
Next, let $H=K_3\cup qK_1$, where $q\in\{0,1,2\}$.
By \cref{lem:add-isolates},
\[
  R(H,\Z_3)\le\max\{11,3+q\}=11.
\]
Since $|V(G)|\ge6$, \cref{prop:twin-lifting} gives
\[
  R(G,\Z_3)\le
  \max\{11,|V(G)|+4\}\le |V(G)|+5.
\]
Similarly, if $H=K_4\cup qK_1$ for some $q\in\{0,1,2\}$, then
\[
  R(H,\Z_3)\le\max\{7,4+q\}=7,
\]
and \cref{prop:twin-lifting} gives $R(G,\Z_3)\le |V(G)|+4$.

It remains to treat $K_5-e$, $K_{2,2,2}$, and the graphs of the form $A+(tK_1\cup F)$ appearing in conclusion~\textup{(iii)} of \cref{thm:irreducible-classification}; adjoining the isolated vertices is then handled by \cref{lem:add-isolates}.
The following lemma treats the last family uniformly.

\begin{lemma}
  \label{lem:two-level}
  Let $A$ be a nonempty graph, let $t\in\{1,2\}$, and let $F\in\{P_3,C_4\}\cup\{K_s:s\ge2\}$.
  If $G=A+(tK_1\cup F)$ and $3\mid e(G)$, then $R(G,\Z_3)\le |V(G)|+5$.
\end{lemma}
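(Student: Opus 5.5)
The plan follows the same two-stage template as \cref{sec:proof-main} and Case~1 of \cref{sec:proof-main2}, with one extra host vertex. Let $n=|V(G)|$ and fix a weighting $w:E(K_{n+5})\to\Z_3$ with no zero-sum copy of $G$. The goal is again to show that $w$ has no two vertex-disjoint nonzero rectangles.

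Granting that, the finish is routine. If every rectangle is zero, \cref{lem:rectangle-form} makes $w$ affine on all $n+5$ vertices. Otherwise, removing one nonzero rectangle leaves $n+1\ge6$ vertices on which every rectangle is zero. \cref{lem:rectangle-form} makes $w$ affine there, and \cref{lem:compression} then gives an affine set of size at least $n+3\ge n+2$. \cref{lem:degree-completion} then produces a zero-sum copy, exactly as before. So everything rests on a switching argument for the special shape $G=A+(tK_1\cup F)$, which is typically \textup{(SVA)}-free. That is why one more host vertex is spent.

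For the switching step, write $T$ for the $t$ isolated-in-the-bottom vertices and pick $\tau\in T$. The natural switches come from $F$.
\begin{enumerate}[label=\textup{(\alph*)}]
  \item For $F=K_s$ with $s\ge2$, take $f_1,f_2\in F$ and the pair $(\tau,f_1)$, distinguished by $f_2$. Swapping the images of $\tau$ and $f_1$ changes only the edges from $f_1$'s image to $F\setminus\{f_1\}$, since $A$ is complete to both.
  \item For $F=P_3$ or $C_4$, take a nonadjacent pair inside $F$ distinguished by a third vertex of $F$.
\end{enumerate}
A second switch uses either the second element of $T$ (when $t=2$) or a second pair in $F$. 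Since $A$ is complete to $T\cup F$, all edges to $A$ are invariant under such swaps. The change in weight is then governed only by edges inside $T\cup F$, and, as in Case~1, it equals a fixed constant plus $\pm\bigl(w(b_it)-w(a_it)\bigr)$ for the distinguishing vertex's image $t\in\{u_i,v_i\}$. This gives $\delta_i\ne0$ for each switch.

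The difficulty is that the two switches are not independent. The distinguishing vertices lie inside the other swapped pair, or the two pairs are joined by a single edge, precisely because \textup{(SVA)} fails. Consequently, the four copy weights take the form $z_0,\ z_0+\delta_1,\ z_0+\delta_2,\ z_0+\delta_1+\delta_2+\varepsilon$, where $\varepsilon$ is an interaction term. This interaction term is itself a rectangle value on the images of the interacting vertices.

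I would handle this as follows. If $\varepsilon=0$, the four weights already cover $\Z_3$ and we are done. If $\varepsilon\ne0$, I would use the spare host vertex, which lies outside both rectangles and outside the image of $V(G)\setminus S$, as an alternative image for one of the distinguishing vertices. This yields a third option for $t_i$, so the increments can be re-chosen to cover $\Z_3$. Concretely, I would first try to show that the set $\{\delta_1,\delta_2,\delta_1+\delta_2+\varepsilon\}$ misses some residue only under linear constraints. These constraints can be broken by moving one endpoint to the spare vertex, or by reordering which rectangle serves which switch.

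I expect this interaction analysis to be the main obstacle. It must be done separately for $F=K_s$ with $t\in\{1,2\}$, for $F=P_3$ and for $F=C_4$, and small $s$ (for example $s=2$, where $F\setminus\{f_1\}$ is a single vertex) may need a direct ad hoc choice. If some small configuration resists this, the fallback is to choose the second switch inside $A$ when $A$ contains a distinguished pair, and otherwise to note that $A$ is a clique or an independent set. In that case $G$ is so explicit that Lemmas~\ref{lem:add-isolates} and the known values $R(K_4,\Z_3)=7$ and $R(K_6,\Z_3)=10$ can be combined directly.
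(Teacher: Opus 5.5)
\textbf{There is a genuine gap.} Your endgame is exactly the paper's: no two vertex-disjoint nonzero rectangles, then an affine set of size $n+3$ via \cref{lem:rectangle-form,lem:compression}, then a zero-sum copy via \cref{lem:degree-completion}. But you never prove the step everything rests on, and the switching route you sketch cannot deliver it.

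First, option (b) is vacuous. The nonadjacent pairs of $P_3$ and $C_4$ are false twins in $G$: each vertex of such a pair has neighborhood $V(A)$ together with the common neighbors in $F$. So no vertex distinguishes them, and swapping them changes nothing.

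More seriously, take $G=K_m+(K_1\cup K_s)$, for instance $K_2+(K_1\cup K_3)$, which has $12$ edges. Pairs inside $A$ or inside $F$ are true twins. A pair $\{g,f\}$ with $g\in V(A)$, $f\in V(F)$ is distinguished only by the vertex $\tau$ of $tK_1$. Hence every triple $(x,y,z)$ with $z$ distinguishing $x,y$ contains $\tau$. A switch backed by a rectangle $(a_i,b_i;u_i,v_i)$ must send $x_i,y_i,z_i$ to $a_i,b_i$ and one of $u_i,v_i$. So two switches backed by two vertex-disjoint rectangles need six distinct vertices, and these do not exist here. No spare image for a distinguishing vertex repairs this.

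Your fallback fails too. Here $A=K_m$ is a clique, but $m$ and $s$ are unbounded and $G$ is connected. So neither \cref{lem:add-isolates} nor the values $R(K_4,\Z_3)$, $R(K_6,\Z_3)$ say anything about $G$.

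The missing idea is to stop switching and use the additive splitting that the join structure provides. Let $f=|V(F)|$ and fix an $m$-set $P$ carrying a fixed copy of $A$ of weight $\alpha_P$. Put $\lambda_P(z)=\sum_{p\in P}w(pz)$. Since $tK_1$ and $F$ are anticomplete, you can place $tK_1$ in a $(t+2)$-set $U$ and $F$ in a disjoint $(f+3)$-set $W$ independently. Every such copy of $G$ has weight $\alpha_P+x+y$. Here $x$ ranges over the $t$-term sums of $\lambda_P$ on $U$, and $y$ ranges over the values $w(F_\sigma)+\sum_{v\in V(F)}\lambda_P(\sigma(v))$ for embeddings $\sigma$ of $F$ into $W$.

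Any two subsets of $\Z_3$ of size at least two sum to all of $\Z_3$, so one of these two spectra must be a singleton. A singleton $F$-spectrum forces $w$ to be affine on $W$; this has to be checked separately for $K_s$, $P_3$ and $C_4$. So if you put a nonzero rectangle $Q$ inside $W$, the $tK_1$-spectrum must be the singleton. This makes $\lambda_P$ constant on $V(K)\setminus(P\cup V(Q))$.

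Now suppose $(x,y;u,v)$ is a second nonzero rectangle disjoint from $Q$. Compare $P=S\cup\{x\}$ and $P=S\cup\{y\}$ at the vertices $u$ and $v$. This yields $\Delta_w(x,y;u,v)=0$, a contradiction. This is where the fifth extra host vertex is spent: $m+(t+2)+(f+3)=n+5$.
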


\begin{proof}
  Let $n=|V(G)|$, let $m=|V(A)|$, and let $f=|V(F)|$.
  Let $K=K_{n+5}$.
  Suppose, for a contradiction, that a weighting $w:E(K)\to\Z_3$ contains no zero-sum copy of $G$.
  Since $m\ge1$, $t\ge1$, and $f\ge2$, we have $n\ge4$.
  If $n=4$, then $A=K_1$, $t=1$, and $F=K_2$, so $e(G)=4$, contrary to $3\mid e(G)$.
  Hence, $n\ge5$.

  For every $m$-set $P\subseteq V(K)$, fix a copy $A_P$ of $A$ with vertex set $P$, and let $\alpha_P=w(A_P)$.
  For each $z\in V(K)\setminus P$, define
  \[
    \lambda_P(z)=\sum_{p\in P}w(pz).
  \]
  Consider an arbitrary partition $V(K)\setminus P=U\mathbin{\dot\cup}W$ with $|U|=t+2$ and $|W|=f+3$.
  Define
  \[
    X_P(U)=\left\{\sum_{u\in I}\lambda_P(u):I\subseteq U,\ |I|=t\right\}
  \]
  and
  \[
    Y_{P,F}(W)=\left\{w(F_\sigma)+\sum_{x\in V(F)}\lambda_P(\sigma(x)):\sigma:V(F)\hookrightarrow W\right\},
  \]
  where $F_\sigma$ is the copy of $F$ determined by $\sigma$.
  The image of $tK_1$ can be chosen in $U$ independently of the image of $F$ in $W$ because no edge of $G$ joins $tK_1$ and $F$.
  Consequently, every element of $\alpha_P+X_P(U)+Y_{P,F}(W)$ is the weight of a copy of $G$, so this sumset does not contain zero.

  A direct check of the three two-element subsets of $\Z_3$ shows that the sum of any two subsets of $\Z_3$ of size at least two is $\Z_3$.
  It follows that
  \begin{equation}
    |X_P(U)|=1\text{ or }|Y_{P,F}(W)|=1.
    \label{eq:two-level-spectrum-dichotomy}
  \end{equation}
  Moreover, if $X_P(U)$ is a singleton, then $\lambda_P$ is constant on $U$.
  For $t=1$, this follows directly from the definition of $X_P(U)$.
  For $t=2$, equality of all two-term sums gives $\lambda_P(v)=\lambda_P(z)$ by comparing $\lambda_P(u)+\lambda_P(v)$ and $\lambda_P(u)+\lambda_P(z)$ for three distinct vertices $u,v,z\in U$.

  The next claim determines the consequence of the second possibility in \eqref{eq:two-level-spectrum-dichotomy}.

  \begin{claim}
    \label{clm:two-level-module-rigidity}
    If $P\subseteq V(K)$ has size $m$, $W\subseteq V(K)\setminus P$ has size $f+3$, and $Y_{P,F}(W)$ is a singleton, then the restriction of $w$ to $K_W$ is affine.
  \end{claim}

  \begin{proof}
    Write $\lambda(z)=\lambda_P(z)$ for every $z\in W$.
    We prove that $w$ is affine on $W$ by considering the three possible forms of $F$.

    \medskip
    \noindent\textbf{Case 1. $F=K_s$.}
    \medskip

    The copies of $F=K_s$ in $W$ are precisely the graphs $K_S$ over the $s$-sets $S\subseteq W$.
    Since $Y_{P,F}(W)$ is a singleton, there exists $\gamma\in\Z_3$ such that $w(K_S)+\sum_{z\in S}\lambda(z)=\gamma$ for every $s$-set $S\subseteq W$.
    Fix distinct vertices $x,y\in W$, and let $T\subseteq W\setminus\{x,y\}$ be an arbitrary $(s-1)$-set.
    Applying the identity defining $\gamma$ to $S=T\cup\{x\}$ and $S=T\cup\{y\}$ gives
    \begin{equation}
      \lambda(x)-\lambda(y)+\sum_{u\in T}\bigl(w(xu)-w(yu)\bigr)=0.
      \label{eq:two-level-clique-difference}
    \end{equation}
    Let $z,z'\in W\setminus\{x,y\}$ be distinct.
    Since $|W\setminus\{x,y,z,z'\}|=s-1$, choose an $(s-2)$-set $R\subseteq W\setminus\{x,y,z,z'\}$.
    Comparing \eqref{eq:two-level-clique-difference} for $T=R\cup\{z\}$ and $T=R\cup\{z'\}$ gives $w(xz)-w(yz)=w(xz')-w(yz')$.
    Equivalently, $\Delta_w(x,y;z,z')=0$.
    Since $x,y,z,z'$ were arbitrary, $\Delta_w(a,b;u,v)=0$ for all distinct $a,b,u,v\in W$.
    As $|W|=s+3\ge5$, \cref{lem:rectangle-form} shows that $w$ is affine on $W$.

    \medskip
    \noindent\textbf{Case 2. $F=P_3$.}
    \medskip

    For distinct vertices $x,y\in W$, define $g_y(x)=w(xy)+\lambda(x)+2\lambda(y)$.
    If $x,y,z$ are distinct, then
    \[
      g_y(x)+g_y(z)=w(xy)+w(yz)+\lambda(x)+\lambda(y)+\lambda(z).
    \]
    This sum is the element of $Y_{P,F}(W)$ obtained by mapping the middle vertex of $P_3$ to $y$ and its endpoints to $x$ and $z$.
    Since $Y_{P,F}(W)$ is a singleton, all sums $g_y(x)+g_y(z)$ with distinct $x,z\in W\setminus\{y\}$ are equal.
    Given distinct $x,x'\in W\setminus\{y\}$, choose $z\in W\setminus\{x,x',y\}$.
    Comparing the sums associated with $\{x,z\}$ and $\{x',z\}$ gives $g_y(x)+g_y(z)=g_y(x')+g_y(z)$, and hence $g_y(x)=g_y(x')$.
    Hence, $g_y$ is constant on $W\setminus\{y\}$; denote its value by $c_y$.
    For distinct $x,y\in W$, we have $w(xy)=c_y-\lambda(x)-2\lambda(y)$.
    Since $w(xy)=w(yx)$, it follows that $c_y-\lambda(x)-2\lambda(y)=c_x-\lambda(y)-2\lambda(x)$, which simplifies in $\Z_3$ to $c_y-\lambda(y)=c_x-\lambda(x)$.
    Hence, $c_y-\lambda(y)$ is independent of $y$; denote its common value by $c$.
    Then $c_y=c+\lambda(y)$ for every $y\in W$, and therefore $w(xy)=c-\lambda(x)-\lambda(y)$ for all distinct $x,y\in W$.
    Taking $\phi_y=-\lambda(y)$ for every $y\in W$, we obtain $w(xy)=\phi_x+\phi_y+c$, so $w$ is affine on $W$.

    \medskip
    \noindent\textbf{Case 3. $F=C_4$.}
    \medskip

    Define a weighting $\widetilde w$ on $K_W$ by $\widetilde w(xy)=w(xy)+2\lambda(x)+2\lambda(y)$.
    Since every vertex of a $4$-cycle has degree two, every $4$-cycle $Q$ satisfies $\widetilde w(Q)=w(Q)+\sum_{z\in V(Q)}\lambda(z)$.
    Thus, all $4$-cycles have the same $\widetilde w$-weight.
    For four distinct vertices $a,b,c,d\in W$, compare the $4$-cycles $abcda$ and $abdca$.
    Since all $4$-cycles have the same $\widetilde w$-weight, these two $4$-cycles have equal weight, and hence
    \[
      0=\widetilde w(bc)+\widetilde w(ad)-\widetilde w(bd)-\widetilde w(ac)=\Delta_{\widetilde w}(b,a;c,d).
    \]
    Since $a,b,c,d$ are arbitrary, every rectangle of $\widetilde w$ has value zero.
    For any distinct $a,b,u,v\in W$, substituting the definition of $\widetilde w$ gives
    \[
      \begin{aligned}
        \Delta_{\widetilde w}(a,b;u,v)
         & =\widetilde w(au)+\widetilde w(bv)-\widetilde w(av)-\widetilde w(bu) \\
         & =\Delta_w(a,b;u,v)
        +2\bigl(\lambda(a)+\lambda(u)+\lambda(b)+\lambda(v)
        -\lambda(a)-\lambda(v)-\lambda(b)-\lambda(u)\bigr)                      \\
         & =\Delta_w(a,b;u,v).
      \end{aligned}
    \]
    Hence, every rectangle of $w$ also has value zero.
    By \cref{lem:rectangle-form}, the weighting $w$ is affine on $W$.

    \medskip
    This proves the claim.
  \end{proof}

  A nonzero rectangle contained in $W$ prevents $w$ from being affine on $W$.
  Hence, \cref{clm:two-level-module-rigidity} rules out $|Y_{P,F}(W)|=1$, and \eqref{eq:two-level-spectrum-dichotomy} forces $|X_P(U)|=1$; consequently, $\lambda_P$ is constant on $U$.
  The following claim extends the conclusion that $\lambda_P$ is constant on $U$ to all vertices outside a fixed nonzero rectangle.

  \begin{claim}
    \label{clm:two-level-star-constancy}
    Let $Q$ be a nonzero rectangle and let $P\subseteq V(K)\setminus V(Q)$ be an $m$-set.
    Then $\lambda_P$ is constant on $V(K)\setminus(P\cup V(Q))$.
  \end{claim}

  \begin{proof}
    Let $Z=V(K)\setminus(P\cup V(Q))$, so $|Z|=t+f+1\ge t+3$.
    Choose an arbitrary $(t+2)$-set $U\subseteq Z$ and let $W=V(Q)\cup(Z\setminus U)$.
    Then $|W|=f+3$, and $W$ contains the nonzero rectangle $Q$.
    An affine weighting has no nonzero rectangle, so $w$ is not affine on $W$.
    Consequently, \cref{clm:two-level-module-rigidity} implies that $Y_{P,F}(W)$ is not a singleton.
    By \eqref{eq:two-level-spectrum-dichotomy}, the set $X_P(U)$ is a singleton, so $\lambda_P$ is constant on $U$.
    Since $U$ was arbitrary and every two vertices of $Z$ lie in a common $(t+2)$-subset of $Z$, the function $\lambda_P$ is constant on $Z$.
  \end{proof}

  We next prove that $w$ contains no two vertex-disjoint nonzero rectangles.
  Suppose, for a contradiction, that $Q_0=(x,y;u,v)$ and $Q_1$ are vertex-disjoint nonzero rectangles.
  Since $|V(K)\setminus(V(Q_0)\cup V(Q_1))|=m+t+f-3\ge m-1$, choose an $(m-1)$-set $S$ outside the two rectangles.
  Let $P_x=S\cup\{x\}$ and $P_y=S\cup\{y\}$.
  The sets $P_x$ and $P_y$ are disjoint from $V(Q_1)$, so \cref{clm:two-level-star-constancy} gives $\lambda_{P_x}(u)=\lambda_{P_x}(v)$ and $\lambda_{P_y}(u)=\lambda_{P_y}(v)$.
  By the definition of $\lambda_P$, these equalities are
  \[
    \begin{aligned}
      \sum_{s\in S}w(su)+w(xu)
       & =\sum_{s\in S}w(sv)+w(xv), \\
      \sum_{s\in S}w(su)+w(yu)
       & =\sum_{s\in S}w(sv)+w(yv).
    \end{aligned}
  \]
  Their difference gives
  \[
    0=w(xu)+w(yv)-w(xv)-w(yu)=\Delta_w(x,y;u,v),
  \]
  contrary to the choice of $Q_0$.
  Hence, $w$ contains no two vertex-disjoint nonzero rectangles.

  If every rectangle in $K$ has value zero, then \cref{lem:rectangle-form} shows that $w$ is affine on $V(K)$.
  Otherwise, fix a nonzero rectangle $Q$ and let $U_0=V(K)\setminus V(Q)$.
  Every rectangle contained in $U_0$ has value zero because a nonzero rectangle in $U_0$ would be vertex-disjoint from $Q$.
  Moreover, $|U_0|=n+1\ge6$, so \cref{lem:rectangle-form} shows that $w$ is affine on $U_0$.
  The partition $V(K)=U_0\cup V(Q)$ satisfies the hypotheses of \cref{lem:compression}, and hence there is a set $D\subseteq V(K)$ with $|D|\le2$ such that $w$ is affine on $V(K)\setminus D$.
  In either case, there is a set $C\subseteq V(K)$ with $|C|\ge n+3$ on which $w$ is affine.

  Choose an $(n+2)$-set $C_0\subseteq C$, and write $w(xy)=\phi_x+\phi_y+c$ for all distinct $x,y\in C_0$.
  Since $3\mid e(G)$, \cref{lem:degree-completion} applied to the labels $(\phi_x)_{x\in C_0}$ gives an injection $\sigma:V(G)\hookrightarrow C_0$ such that
  \[
    \sum_{v\in V(G)}\deg_G(v)\phi_{\sigma(v)}=0.
  \]
  For the copy $G_\sigma$ determined by $\sigma$, we obtain
  \[
    w(G_\sigma)=\sum_{xy\in E(G)}w\bigl(\sigma(x)\sigma(y)\bigr)=\sum_{v\in V(G)}\deg_G(v)\phi_{\sigma(v)}+e(G)c=0.
  \]
  This contradicts the assumption that $w$ contains no zero-sum copy of $G$ and proves the lemma.
\end{proof}

The two exceptional graphs require separate arguments, given in Appendices~\ref{app:k5minus-bound} and~\ref{app:k222-bound}.

\begin{lemma}
  \label{lem:k5minus}
  We have $R(K_5-e,\Z_3)\le 10=|V(K_5-e)|+5$.
\end{lemma}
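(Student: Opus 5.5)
We follow the same template as \cref{lem:two-level}. Fix a weighting $w:E(K_{10})\to\Z_3$ with no zero-sum copy of $G=K_5-e$, and aim to show that $w$ has no two vertex-disjoint nonzero rectangles. Once this holds, the standard finish applies. If every rectangle is zero, \cref{lem:rectangle-form} makes $w$ affine on all $10$ vertices. Otherwise, remove one nonzero rectangle and use \cref{lem:rectangle-form} on the remaining $6$ vertices. \cref{lem:compression} then gives an affine set $C$ with $|C|\ge 8\ge n+2=7$. Finally, \cref{lem:degree-completion} together with $3\mid e(G)=9$ produces a zero-sum copy.

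To exclude two disjoint nonzero rectangles, we use the structure $K_5-e=K_3+2K_1$. Write the copy as a triangle $T=\{p_1,p_2,p_3\}$ together with a nonadjacent pair $\{a,b\}$ joined to all of $T$. For a triangle $T$ in the host, set $\lambda_T(z)=\sum_{p\in T}w(pz)$. The copy weight is then $w(K_T)+\lambda_T(a)+\lambda_T(b)+0$, since $ab$ is not an edge.

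Suppose $Q_1,Q_2$ are disjoint nonzero rectangles, using $8$ vertices, and let $r_1,r_2$ be the two remaining vertices. The idea is to mimic the switching of \cref{clm:no-disjoint-nonzero-rectangles}, with the pair $\{a,b\}$ serving as the free vertices. Note that $K_5-e$ has the induced substructure required by the SVA-type switch: $a$ and $b$ are false twins, so we cannot switch them, but we can switch a triangle vertex $p$ with $a$. Indeed $N[p]\setminus\{a\}$ differs from $N(a)$ only by $b$, so $b$ distinguishes $p$ from $a$.

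Concretely, we try the tuple $(p_1,a,b,\dots)$. For the second switch we need another distinguishing triple whose pair is complete or anticomplete to $\{p_1,a\}$. The candidates are $(p_2,b,a)$, but these overlap with the first triple. Since $G$ has only $5$ vertices, two disjoint triples cannot fit, so a direct SVA switch is impossible (consistent with $K_5-e$ being irreducible).

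We therefore fall back on the dichotomy method instead. Fix a triangle on $P$ with $|P|=3$ in the host. The remaining $7$ vertices give $\lambda_P$, and a zero-sum copy exists as soon as some pair $z,z'$ outside $P$ satisfies $\lambda_P(z)+\lambda_P(z')=-w(K_P)$. A non-zero-sum weighting therefore forces the multiset of $\lambda_P$-values on the $7$ outside vertices to avoid this pair sum. If the values split into two classes whose sum is $-w(K_P)$, then only one class can be used. Since every sum over pairs from a set hitting $\ge2$ values of $\Z_3$ covers a good portion of $\Z_3$, we expect this constraint to force $\lambda_P$ to be nearly constant, or confined to a single forbidden pattern, on $V(K)\setminus P$.

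Applying this to $P=S\cup\{x\}$ and $P=S\cup\{y\}$ for a nonzero rectangle $(x,y;u,v)$, with $|S|=2$ chosen outside, we subtract as in the end of \cref{lem:two-level} to get $\Delta_w(x,y;u,v)=0$, a contradiction. This yields the exclusion, or at least restricts the nonzero rectangles enough to run \cref{lem:compression}.

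The main obstacle is the step showing that $\lambda_P$ is constant on enough vertices. A pair sum avoiding one value $\gamma$ does not force constancy. For example, values in $\{\alpha,\beta\}$ with $2\alpha,2\beta,\alpha+\beta\neq\gamma$ can occur, namely all values equal to $\alpha$ with $2\alpha\ne\gamma$, or two values whose sums avoid $\gamma$. We would handle this by case analysis on $w(K_P)$. We would also vary the triangle by swapping one vertex between $P$ and the outside, and use that the edges $pz$ re-enter the triangle weight. If this stalls, the fallback is a finite case analysis or computer check over weightings of $K_{10}$ up to the affine-equivalence $w\mapsto w+\phi_u+\phi_v+c$, which preserves zero-sum copies of the $9$-edge graph whenever $\sum\deg\phi$ vanishes.
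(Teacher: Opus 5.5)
Your closing step is fine: if $w$ had no two vertex-disjoint nonzero rectangles, then removing one nonzero rectangle leaves six vertices, \cref{lem:rectangle-form} and \cref{lem:compression} give an affine set of at least $8\ge 7$ vertices, and \cref{lem:degree-completion} finishes. The gap is that you never prove this exclusion, and the template of \cref{lem:two-level} does not supply it.

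\textbf{Why the template fails here.} In \cref{lem:two-level}, the contradiction comes from the sumset $\alpha_P+X_P(U)+Y_{P,F}(W)$: a nonzero rectangle inside $W$ forces $|Y_{P,F}(W)|\ge 2$, and hence $\lambda_P$ is constant on $U$. For $K_5-e=K_3+2K_1$ there is no component $F$, which is exactly why $K_5-e$ falls outside \cref{lem:two-level}. So for a fixed triangle $P$ the only information is that no two outside vertices satisfy $\lambda_P(z)+\lambda_P(z')=-w(K_P)$. In particular, the second rectangle $Q_1$ plays no role in your argument.

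\textbf{Where the subtraction breaks.} Worked out correctly, this constraint gives: $\lambda_P$ is constant on the seven outside vertices except for at most one. The reason is that for $\alpha\ne\beta$ the sums $2\alpha,2\beta,\alpha+\beta$ exhaust $\Z_3$; your second example is possible only if one value occurs once. That single exceptional vertex breaks your subtraction. With $P_x=S\cup\{x\}$ and $P_y=S\cup\{y\}$, the exceptional vertex of either triangle may be $u$ or $v$. Then $\lambda_{P_x}(u)=\lambda_{P_x}(v)$ and $\lambda_{P_y}(u)=\lambda_{P_y}(v)$ do not follow, so $\Delta_w(x,y;u,v)=0$ is not obtained. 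You flag this yourself as the main obstacle and defer it to case analysis or a computer search, so the heart of the proof is missing.

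Also, $w\mapsto w+\phi_u+\phi_v+c$ is not a symmetry of the problem. It shifts a copy's weight by $\sum_{p\in T}\phi_p$ over that copy's triangle $T$, which depends on the copy. Only maps $w\mapsto \mu w+b$ with $\mu\in\Z_3^\times$ are safe, using $e(K_5-e)=9$.

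\textbf{The missing idea.} Organize by five-sets rather than by triangles. Your own formula says the copy on $S=P\cup\{a,b\}$ has weight $w(K_S)-w(ab)$, and every copy of $K_5-e$ on $S$ is $K_S$ minus one edge. So zero-sum-freeness means $w(K_S)\ne w(e)$ for every $e\in E(K_S)$, and each five-set sees at most two edge weights. The argument then runs as follows.
\begin{enumerate}
\item Three edges of weights $0,1,2$ would have to be pairwise disjoint, since otherwise they lie in a common five-set. Any cross edge between two of them then yields three edges of all three weights inside a five-set. Hence $w$ takes at most two values globally.
\item It is not constant, since a constant weighting $d$ gives every copy weight $9d=0$.
\item After normalizing by $w\mapsto\mu w+b$ to values in $\{0,1\}$, each $K_S$ contains both values (again because of $9d=0$). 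This forces $w(K_S)=2$ for every five-set.
\item Differencing $w(K_{A\cup\{p\}})-w(K_{A\cup\{q\}})$ over four-sets $A$ shows that $w(pt)-w(qt)$ is a constant $d$ with $4d=0$. So $w(pt)=w(qt)$ for all $t$, which makes $w$ constant, a contradiction.
\end{enumerate}
No rectangles, compression, or affine sets are needed.
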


\begin{lemma}
  \label{lem:k222}
  We have $R(K_{2,2,2},\Z_3)\le11=|V(K_{2,2,2})|+5$.
\end{lemma}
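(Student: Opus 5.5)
The statement immediately above is \cref{lem:k222}, so the plan is to show $R(K_{2,2,2},\Z_3)\le 11$. I would follow the template already used twice in the paper: exclude two vertex-disjoint nonzero rectangles, extract a large affine set, and finish with \cref{lem:degree-completion}.

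Fix a weighting $w:E(K_{11})\to\Z_3$ with no zero-sum copy of $K_{2,2,2}$. Write $K_{2,2,2}$ as $K_6$ minus a perfect matching $\{x_1y_1,x_2y_2,x_3y_3\}$. The key switching structure is the following. For each part $\{x_i,y_i\}$, all other vertices are adjacent to both $x_i$ and $y_i$, so exchanging the images of $x_i,y_i$ changes nothing. Instead I would use \emph{re-pairing}: given four host vertices $a,b,u,v$ used for two parts, the three ways to split them into two nonadjacent pairs give three copies. These copies differ exactly by rectangle values $\Delta_w(a,b;u,v)$ and their analogues, since the edges to the third part are common to all three.

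Step 1 is to show that a nonzero rectangle cannot be vertex-disjoint from another nonzero rectangle. Given disjoint nonzero rectangles on $\{a_1,b_1,u_1,v_1\}$ and $\{a_2,b_2,u_2,v_2\}$, I would use the three host vertices outside them, and try to build copies in which parts 1 and 2 sit on one rectangle's vertex set and part 3 uses two of the remaining seven vertices. The re-pairing increments on a nonzero rectangle take at least two distinct values. Combining with a second independent choice coming from the other rectangle, the increments should fill $\Z_3$ as in \cref{clm:no-disjoint-nonzero-rectangles}.

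The independence of the two choices is the main obstacle. The two rectangles cannot both host two parts in a single six-vertex copy, because that requires eight vertices. I would therefore instead fix parts 1 and 2 on one rectangle $Q_1$ and let part 3 range over pairs from the second rectangle's vertex set together with the three spare vertices. Let $\mu(p)=\sum_{q\in V(Q_1)}w(pq)$. The copy weight is then an internal term from $Q_1$, which takes at least two values by re-pairing, plus $\mu(p)+\mu(p')$ for the pair $\{p,p'\}$ of part 3. If the pair sums $\mu(p)+\mu(p')$ over the seven outside vertices took two values, then two two-element sets would sum to $\Z_3$, exactly as in the sumset argument of \cref{lem:two-level}. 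So $\mu$ is constant on those seven vertices. I would then swap the roles of the rectangles and derive a contradiction from $\Delta_w(Q_2)\ne0$ by the subtraction trick of \cref{clm:two-level-star-constancy}, comparing $\mu$ computed from two sets that differ in one vertex. I expect the bookkeeping here to need a small case split on the multiset of re-pairing values.

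Step 2 follows from Step 1. If some rectangle is nonzero, the remaining $7\ge6$ vertices have only zero rectangles, so \cref{lem:rectangle-form} and \cref{lem:compression} give an affine set $C$ of size $\ge9\ge 6+2$. If no rectangle is nonzero, $w$ is affine on all of $K_{11}$. Applying \cref{lem:degree-completion} on $C$, with $e(K_{2,2,2})=12$ divisible by three, yields a zero-sum copy, a contradiction. If Step 1 resists, a fallback is to use that an affine restriction on any $8$-set already suffices, and to analyze directly the case where every pair of nonzero rectangles intersects.
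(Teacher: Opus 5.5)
Your Step~2 is sound. Once no two nonzero rectangles are vertex-disjoint, \cref{lem:rectangle-form} on the seven vertices off a nonzero rectangle and then \cref{lem:compression} give an affine set of size at least $9$, and \cref{lem:degree-completion} finishes. The gap is Step~1, which carries the whole difficulty. Your re-pairing computation is correct, but it only shows this: if a $4$-set $S$ carries a nonzero rectangle, then $\mu_S(p)=\sum_{q\in S}w(pq)$ is constant on $V(K_{11})\setminus S$. The variability comes from inside $S$, so this constancy is available only for $4$-sets that are themselves nonzero. The subtraction trick of \cref{clm:two-level-star-constancy} needs constancy for two sets $T\cup\{x\}$ and $T\cup\{y\}$ with $(x,y;u,v)$ nonzero and $u,v\notin T$. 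In \cref{lem:two-level} this came for free, because $P$ was arbitrary and the nonzero rectangle sat in the separate host set $W$ of $F$. Nothing in your set-up forces $T\cup\{a_2\}$ and $T\cup\{b_2\}$ to carry nonzero rectangles. ``Swapping the roles'' only gives constancy for $V(Q_1)$ and $V(Q_2)$, and these are disjoint rather than differing in one vertex.

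The copies you actually use cannot produce a contradiction. Put $w(a_1b_1)=w(a_1u_1)=w(a_2b_2)=w(a_2u_2)=1$ and weight $0$ on every other edge. Then:
\begin{itemize}
\item $(a_1,b_1;u_1,v_1)$ and $(a_2,b_2;u_2,v_2)$ are vertex-disjoint nonzero rectangles;
\item the three $4$-cycles on each $V(Q_i)$ have weights $1,1,2$;
\item every edge leaving $V(Q_i)$ has weight $0$.
\end{itemize}
So every copy with two parts on $V(Q_1)$, or two parts on $V(Q_2)$, has weight $1$ or $2$, whatever the third part is. This weighting does have zero-sum copies, for example on any six vertices spanning none of the four weight-$1$ edges. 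But those copies are not anchored on either rectangle, and your proposal gives no mechanism for controlling them. The fallback does not help either: the case in which all nonzero rectangles pairwise intersect is exactly the one Step~2 already handles.

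The paper avoids rectangle exclusion for $K_{2,2,2}$ altogether. It fixes one part $\{a,b\}$ and sets $\lambda(z)=w(az)+w(bz)$ and $\omega(uv)=w(uv)-\lambda(u)-\lambda(v)$ on the other nine vertices. The copies having $\{a,b\}$ as a part then correspond exactly to the $4$-cycles of $\omega$, with equal weights. The separate \cref{lem:c4-rigidity} shows that a weighting of $K_9$ with no zero-sum $4$-cycle is constant and nonzero on some eight-set. This makes $w$ affine there, with labels $\lambda$, and two applications of \cref{thm:EGZ} produce a zero-sum $K_{2,2,2}$. The missing ingredient in your proposal is a global rigidity statement of this kind, in place of two independent switches, which $K_{2,2,2}$ does not readily supply.
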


We now finish the second case of \cref{thm:main}.
The graphs $K_1$, $2K_1$, and the graphs $K_3\cup qK_1$ and $K_4\cup qK_1$ were handled above.
For $J\in\{K_5-e,K_{2,2,2}\}$ and $q\in\{0,1,2\}$, \cref{lem:add-isolates,lem:k5minus,lem:k222} gives
\[
  R(J\cup qK_1,\Z_3)\le |V(J\cup qK_1)|+5.
\]
Likewise, if $J=A+(tK_1\cup F)$ with $F\in\{P_3,C_4\}\cup\{K_s:s\ge2\}$, then \cref{lem:two-level} gives $R(J,\Z_3)\le |V(J)|+5$, and \cref{lem:add-isolates} gives
\[
  R(J\cup qK_1,\Z_3)\le |V(J\cup qK_1)|+5
\]
for $q=0,1,2$.
Thus, \cref{thm:irreducible-classification} covers every remaining possibility for $H$, and \cref{prop:twin-lifting} yields
\[
  R(G,\Z_3)\le |V(G)|+5.
\]
This completes the proof of \cref{thm:main}.

\section{Proof of \cref{cor:main}}
\label{sec:proof-cor-main}

Let $G$ be a graph with $3\mid e(G)$, and set $n=|V(G)|$.
In this section, we prove that $R(G,\Z_3)\le n+8$ with equality if and only if $G$ is $K_3$.

First suppose that $n\ge6$.
By \cref{thm:main}, $R(G,\Z_3)\le n+5<n+8$.
If $n\le2$, then $3\mid e(G)$ implies $e(G)=0$.
Hence, $G=nK_1$ and $R(G,\Z_3)=n<n+8$.

It remains to consider $3\le n\le5$.
To bring $G$ into the range of \cref{thm:main} without adding any edges, let $\widehat G=G\cup(6-n)K_1$.
Then $\widehat G$ has six vertices and $e(\widehat G)=e(G)\equiv0\pmod3$, so \cref{thm:main} gives $R(\widehat G,\Z_3)\le11$.
Since the added vertices are isolated, deleting the corresponding vertices from any zero-sum copy of $\widehat G$ leaves a zero-sum copy of $G$ with the same edge set.
Hence,
\[
  R(G,\Z_3)\le R(\widehat G,\Z_3)\le11.
\]

If $n\in\{4,5\}$, then $R(G,\Z_3)\le11<n+8$, so the inequality is again strict.
Finally, suppose that $n=3$.
Since $3\mid e(G)$ and $0\le e(G)\le3$, either $e(G)=0$ or $e(G)=3$.
Thus, $G=3K_1$ or $G=K_3$.
In the first case, $R(3K_1,\Z_3)=3<11=|V(3K_1)|+8$.
In the second case, the known value $R(K_3,\Z_3)=11$~\cite{ChungGraham1983} shows that $R(K_3,\Z_3)=|V(K_3)|+8$, so equality holds.

Combining all cases, we conclude that $R(G,\Z_3)\le |V(G)|+8$ with equality if and only if $G\cong K_3$.
This completes the proof of \cref{cor:main}.

\paragraph{Acknowledgments.}

We are grateful to Professor Yair Caro for bringing the problem studied in this paper to our attention, directing us to the relevant literature, and offering valuable suggestions that improved the paper.

\paragraph{Declaration on the use of AI.}

Generative AI was used only to assist with the structural reduction in \cref{thm:irreducible-classification} and with language polishing.
The authors independently verified and revised all AI-assisted material and take full responsibility for every mathematical claim and for the final manuscript.

\appendix

\section{The bound for $K_5-e$}
\label{app:k5minus-bound}

\begin{proof}[Proof of \cref{lem:k5minus}]
  Suppose for contradiction that $w:E(K_{10})\to\Z_3$ contains no zero-sum copy of $K_5-e$.
  For a five-set $S$, write
  \[
    T(S)=\sum_{e\in E(K_S)}w(e).
  \]
  Every copy of $K_5-e$ on the vertex set $S$ is obtained by omitting one edge $e$ of $K_S$, and its weight is $T(S)-w(e)$.
  Hence
  \begin{equation}
    T(S)\ne w(e)
    \text{ for every }e\in E(K_S).
    \label{eq:k5-noedge}
  \end{equation}
  Since $T(S)\in\Z_3$, if all three residues occurred among the edge weights of $K_S$, then some edge $e\in E(K_S)$ would satisfy $w(e)=T(S)$, contradicting \eqref{eq:k5-noedge}.
  Hence the edge weights of $K_S$ take at most two values.

  We now show that this restriction on every five-set forces the edge weights of $K_{10}$ to take at most two values.
  Suppose otherwise, and choose edges $e_0,e_1,e_2$ of weights $0,1,2$, respectively.
  Their endpoint union has more than five vertices; otherwise, it is contained in a five-set $S$, and $K_S$ has edges of all three weights, a contradiction.
  Hence the three edges are pairwise disjoint.
  Write $e_i=a_i a_i'$ for $i=0,1,2$.
  Consider the cross edge $a_0a_1$.
  If its weight is $0$, then $a_0a_1,e_1,e_2$ use all three residues on five vertices.
  If its weight is $1$, then $e_0,a_0a_1,e_2$ do so.
  If its weight is $2$, then $e_0,e_1,a_0a_1$ do so on four vertices.
  Every case contradicts the fact that the edge weights on each five-set take at most two values.
  Thus at most two residues occur globally.

  The weighting $w$ is not constant.
  Indeed, if $w(e)=d\in\Z_3$ for every $e\in E(K_{10})$, then every copy $H$ of $K_5-e$ satisfies $w(H)=9d=0$, contrary to our assumption.
  Therefore, the weighting $w$ takes exactly two values.
  We may assume that every edge has weight $0$ or $1$.
  Indeed, an affine permutation $x\mapsto ux+b$ of $\Z_3$, with $u\ne0$, sends the two values of $w$ to $0$ and $1$.
  Multiplication by $u$ preserves zero sums, while adding $b$ to every edge changes the weight of each copy of $K_5-e$ by $9b=0$.

  We now determine the total weight on each five-set.
  Let $S$ be a five-set.
  Both $0$ and $1$ occur among the weights of $E(K_S)$; otherwise, every edge of $K_S$ would have the same weight $d\in\{0,1\}$, and each copy of $K_5-e$ on $S$ would have weight $9d=0$, contrary to our assumption.
  Since both values occur on $E(K_S)$, \eqref{eq:k5-noedge} implies $T(S)\notin\{0,1\}$.
  Consequently,
  \begin{equation}
    \sum_{e\in E(K_S)}w(e)=2
    \label{eq:k5-five-sum}
  \end{equation}
  for every five-set $S$.

  Fix distinct vertices $p,q$ and a four-set $A\subseteq V(K_{10})\setminus\{p,q\}$.
  Subtracting \eqref{eq:k5-five-sum} for $A\cup\{p\}$ and $A\cup\{q\}$ gives
  \begin{equation}
    \sum_{t\in A}\bigl(w(pt)-w(qt)\bigr)=0.
    \label{eq:k5-row-sum}
  \end{equation}
  For each $t\in V(K_{10})\setminus\{p,q\}$, write $d_t=w(pt)-w(qt)$.
  By \eqref{eq:k5-row-sum}, $\sum_{t\in A}d_t=0$ for every four-set $A\subseteq V(K_{10})\setminus\{p,q\}$.
  Let $r,s\notin\{p,q\}$ be distinct, and choose a three-set $B$ disjoint from $\{p,q,r,s\}$.
  Applying \eqref{eq:k5-row-sum} to $B\cup\{r\}$ and $B\cup\{s\}$ gives
  \[
    d_r+\sum_{b\in B}d_b=0,
    \text{ and }d_s+\sum_{b\in B}d_b=0.
  \]
  Hence $d_r=d_s$.
  Since $r$ and $s$ are arbitrary, all values $d_t$ are equal; denote their common value by $d$.
  Any four-term sum is then $4d=d$ in $\Z_3$, and therefore $d=0$.
  We have proved
  \begin{equation}
    w(pt)=w(qt)
    \label{eq:k5-row-equal}
  \end{equation}
  for $t\ne p,q$.

  Fix a vertex $t$.
  For any distinct $p,q\in V(K_{10})\setminus\{t\}$, \eqref{eq:k5-row-equal} gives $w(tp)=w(tq)$.
  Thus all edges incident with $t$ have the same weight.
  If $s\ne t$, the common weights of the edges incident with $s$ and with $t$ are both equal to $w(st)$, and hence are equal.
  Therefore, $w$ is constant, contradicting the fact that both values $0$ and $1$ occur.
  The contradiction proves the lemma.
\end{proof}

\section{The bound for $K_{2,2,2}$}
\label{app:k222-bound}

The second exceptional graph $K_{2,2,2}$ requires a different argument.
After fixing one part of the tripartition, the weight of a copy of $K_{2,2,2}$ can be encoded by the weight of a $4$-cycle under an auxiliary weighting.

We first prove a lemma about weightings of $K_9$ with no zero-sum $4$-cycle.

\begin{lemma}
  \label{lem:c4-rigidity}
  Let $w:E(K_9)\to\Z_3$ be a weighting with no zero-sum $4$-cycle.
  Then there exist an eight-set $U\subseteq V(K_9)$ and a residue $\varepsilon\in\Z_3^\times$ such that $w(uv)=\varepsilon$ for every distinct $u,v\in U$.
\end{lemma}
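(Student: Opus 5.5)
The plan is to use the same switching and rectangle machinery as before, specialized to $4$-cycles. A $4$-cycle $abcd$ has weight $w(ab)+w(bc)+w(cd)+w(da)$. The three $4$-cycles on a fixed four-set $\{a,b,c,d\}$ differ pairwise by rectangle values; for example,
\[
  w(abcda)-w(abdca)=\Delta_w(b,a;c,d).
\]
So if some four-set carries a nonzero rectangle, the three cycle weights on it are pairwise distinct, hence cover $\Z_3$, and one of them is zero. Therefore the hypothesis forces every rectangle in $K_9$ to be zero. Since $9\ge4$, \cref{lem:rectangle-form} then makes $w$ affine on all of $K_9$: $w(uv)=\phi_u+\phi_v+c$.

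In the affine case every vertex of $C_4$ has degree two, so a $4$-cycle on a four-set $S$ has weight
\[
  \sum_{v\in S}2\phi_v+4c=2\sum_{v\in S}\phi_v+c .
\]
The hypothesis says this never vanishes, that is, $\sum_{v\in S}\phi_v\ne c$ for every four-set $S\subseteq V(K_9)$. Let $n_0,n_1,n_2$ be the numbers of vertices with label $0,1,2$, so $n_0+n_1+n_2=9$. The achievable four-term sums are determined by these multiplicities.
\begin{itemize}
  \item If two classes each have size at least $2$ and the third is nonempty, or if all three classes are fairly large, then all residues are attained as four-term sums, which is a contradiction.
  \item The remaining possibility is that one class has size at least $8$, so at most one vertex carries a different label.
  \item I will check the intermediate cases by hand, for instance $(7,2,0)$: its four-sets give sums $4x$, $3x+y$, $2x+2y$ with $y\ne x$, which are three distinct residues. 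In general, a class of size at least $4$ together with another class of size at least $2$ gives the sums $4x$, $3x+y$, $2x+2y$, which are exactly all residues. Mixed small configurations need a short case check.
\end{itemize}
So some eight-set $U$ has a constant label $\phi$, and on it
\[
  w(uv)=2\phi+c=:\varepsilon .
\]
Finally, I must show $\varepsilon\ne0$. A constant weighting $\varepsilon$ on $U$ gives every $4$-cycle inside $U$ the weight $4\varepsilon=\varepsilon$, so the absence of a zero-sum $4$-cycle forces $\varepsilon\in\Z_3^\times$.

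The main obstacle is the multiplicity case analysis showing that the labels are constant off a single vertex. I will organize it by the largest class size $k$:
\begin{itemize}
  \item For $k\le7$, there are at least two off-class vertices. If they share a label, or if both remaining classes are nonempty, then four-sets such as $x^4$, $x^3y$, $x^2y^2$ and $x^3z$, $x^2yz$ produce all residues. Here $x$ denotes the majority label.
  \item The configurations in which one class is large and exactly one vertex lies outside it have $k\ge8$, which is the desired conclusion.
\end{itemize}
I expect this to be routine but fiddly, especially checking that no multiplicity pattern with $k\le7$ survives when $n_0+n_1+n_2=9$.
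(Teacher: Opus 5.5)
There is a genuine gap at your first step, and the rest of the argument depends on it. On a four-set $\{a,b,c,d\}$ put $m_1=w(ab)+w(cd)$, $m_2=w(ac)+w(bd)$ and $m_3=w(ad)+w(bc)$. The three $4$-cycles on this set have weights $m_2+m_3$, $m_1+m_3$, $m_1+m_2$, and the rectangle values on it are exactly the differences $m_i-m_j$.

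A nonzero rectangle therefore only tells you that the $m_i$ are not all equal. It does not make the three cycle weights pairwise distinct. For instance, $(m_1,m_2,m_3)=(m,m,0)$ with $m\in\Z_3^\times$ has a nonzero rectangle, yet its cycle weights are $m,m,2m$, none of which is zero. So the hypothesis does not force every rectangle to vanish, and $w$ need not be affine on $K_9$.

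Here is a concrete example. Let $U=\{u_1,\ldots,u_8\}$ and let $z$ be the ninth vertex. Set $w\equiv1$ on $K_U$, $w(zu_1)=2$, and $w(zu)=1$ for $u\in U\setminus\{u_1\}$. Every $4$-cycle inside $U$ has weight $4\cdot1=1$ in $\Z_3$. A $4$-cycle $zpqrz$ has weight $w(zp)+w(zr)+2\in\{1,2\}$. So there is no zero-sum $4$-cycle. However, $\Delta_w(z,v;u_1,x)=2+1-1-1=1$ for distinct $v,x\in U\setminus\{u_1\}$, so $w$ is not affine. This is exactly why the lemma only promises an eight-set rather than all nine vertices.

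Your multiplicity count is correct for affine weightings. A label class of size at least four, together with either a second class of size at least two or two singletons of different labels, realizes all three residues as four-term sums, so some label occurs at least eight times. But that is only the easy special case. What is missing is an argument that tolerates non-affine vertices.

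The paper gets this from $f_{uv}(x)=w(ux)+w(vx)$. Since the cycle $uxvyu$ has weight $f_{uv}(x)+f_{uv}(y)$, each $f_{uv}$ equals a fixed nonzero residue except at at most one exceptional vertex. Fixing a root $r$ and counting exceptional pairs yields a set $X$ of at least seven vertices on which $w(xy)=c-a_x-a_y$, except along a matching of mutually exceptional pairs. A label analysis much like yours then gives a constant $K_6$. A separate case analysis of how the three remaining vertices attach to it extends this to a constant $K_8$.
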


\begin{proof}
  For distinct vertices $u,v$ and $x\notin\{u,v\}$, define $f_{uv}(x)=w(ux)+w(vx)$.
  Since the $4$-cycle $uxvyu$ has weight $f_{uv}(x)+f_{uv}(y)$, the value $0$ occurs at most once among the values of $f_{uv}$, and the values $1$ and $2$ cannot both occur.
  Hence, for every pair $u,v$, there is a residue $q_{uv}\in\Z_3^\times$ such that $f_{uv}(x)=q_{uv}$ for all but at most one vertex $x\notin\{u,v\}$.
  When it exists, the remaining vertex has $f_{uv}(x)=0$ and will be called \emph{exceptional} for $uv$.

  We first find a six-set on which $w$ is constant.
  Fix a vertex $r$.
  For $x\ne r$, write $a_x=w(rx)$ and $c_x=a_x+q_{rx}$.
  If $c_x\ne c_y$, then either $y$ is exceptional for $rx$ or $x$ is exceptional for $ry$; otherwise,
  \[
    w(xy)=q_{rx}-a_y=q_{ry}-a_x
  \]
  would give $c_x=c_y$.
  For each $x\ne r$, at most one vertex $y\notin\{r,x\}$ is exceptional for the pair $rx$, so at most eight pairs $\{x,y\}\subseteq V(K_9)\setminus\{r\}$ satisfy $c_x\ne c_y$.
  If every residue occurred at most six times among the eight values $c_x$, then the number of unordered pairs $\{x,y\}$ with $c_x\ne c_y$ would be at least $6\cdot2=12$, with equality only for a $6+2$ split.
  Thus, there are a residue $c\in\Z_3$ and a set $X\subseteq V(K_9)\setminus\{r\}$ with $|X|\ge7$ such that $c_x=c$ for every $x\in X$.

  For $x\in X$, one has $q_{rx}=c-a_x\ne0$, and hence $a_x\ne c$.
  Moreover, for distinct $x,y\in X$, if $y$ is not exceptional for $rx$, then
  \begin{equation}
    w(xy)=c-a_x-a_y.
    \label{eq:c4-normal-form}
  \end{equation}
  The same formula holds if $x$ is not exceptional for $ry$.
  Exceptionality is mutual on $X$.
  Indeed, suppose that $y$ is exceptional for $rx$, but $x$ is not exceptional for $ry$.
  Then
  \[
    w(xy)=-a_y=q_{ry}-a_x=c-a_y-a_x,
  \]
  which forces $a_x=c$, contrary to $a_x\ne c$.
  Therefore, either \eqref{eq:c4-normal-form} holds or $x$ and $y$ are exceptional for each other.
  In the latter case, $w(xy)=-a_x=-a_y$, so $a_x=a_y$.
  Since each pair $rx$ has at most one exceptional vertex, these exceptional pairs form a matching $M$ on $X$.

  For every four-set $S\subseteq X$, the graph $M[S]$ is a matching of size at most two, and hence $K_S-M[S]$ contains a $4$-cycle.
  By \eqref{eq:c4-normal-form}, the weight of that cycle is
  \begin{equation}
    c+\sum_{x\in S}a_x\ne0.
    \label{eq:c4-fourset-sum}
  \end{equation}
  If $c=0$, then the labels $a_x$ lie in $\{1,2\}$.
  One value, say $\varepsilon$, occurs at least six times, while the other occurs at most once.
  Indeed, if both $1$ and $2$ occurred at least twice, we could choose a $4$-set $S\subseteq X$ containing two vertices of each label.
  Then $\sum_{x\in S}a_x=2\cdot1+2\cdot2=0$ in $\Z_3$, contradicting \eqref{eq:c4-fourset-sum} because $c=0$.
  Thus, one label occurs at most once, and the other occurs at least $|X|-1\ge6$ times.
  Suppose that a vertex $z\in X$ has the other label, so $a_z=-\varepsilon$.
  If $xy\in M$ with $a_x=a_y=\varepsilon$, choose an $\varepsilon$-labeled vertex $u\in X\setminus\{x,y\}$ such that $uz\notin M$; this is possible because $M$ is a matching and at least six vertices have label $\varepsilon$.
  Since $xy\in M$, while $yu,uz,zx\notin M$, we have
  \[
    w(xy)=-\varepsilon,\text{ }
    w(yu)=\varepsilon\text{ and }
    w(uz)=w(zx)=0.
  \]
  Thus, the $4$-cycle $xyuzx$ is zero-sum, a contradiction.
  Therefore, no two $\varepsilon$-labeled vertices form an edge of $M$.

  If every label equals $\varepsilon$, then $M$ has at most one edge, since two edges of $M$ together with the two cross edges form a zero-sum $4$-cycle.
  In either case, five $\varepsilon$-labeled vertices span no edge of $M$, and together with $r$ they form a constant $K_6$ of weight $\varepsilon$.

  Suppose instead that $c\ne0$.
  Then each $a_x$ is either $0$ or $-c$.
  Suppose that some $a_x=-c$.
  If at least three vertices of $X$ have label $0$, then one vertex labeled $-c$ together with three vertices labeled $0$ contradicts \eqref{eq:c4-fourset-sum}.
  Otherwise, at most two vertices have label $0$, so at least five vertices have label $-c$, and four of them again contradict \eqref{eq:c4-fourset-sum}.
  Hence, every $a_x$ is zero.
  The matching $M$ is empty: if $xy\in M$, choose $u,v\in X\setminus\{x,y\}$ with $uv\notin M$; then the $4$-cycle $xyuvx$ has weight $0+c+c+c=0$.
  Thus, every edge of $K_X$ has weight $c$, and any six vertices of $X$ form a constant $K_6$.

  Let $T$ be a constant six-set, and multiply all weights by the inverse of its nonzero edge weight.
  We may assume that every edge of $K_T$ has weight $1$.
  Write $T=\{1,\ldots,6\}$.
  For a vertex $z\notin T$ and $i\in T$, let $\alpha_i(z)=w(zi)$.
  A $4$-cycle through $z$ and three vertices of $T$ gives $\alpha_i(z)+\alpha_j(z)\ne1$ for $i\ne j$.
  This condition forbids the simultaneous occurrence of $0$ and $1$ among the coordinates and allows at most one coordinate equal to $2$.
  Consequently, $z$ has one of the following four types:
  \[
    A=(1,\ldots,1),
    \qquad
    A_i=A\text{ with the $i$-th entry replaced by }2,
  \]
  \[
    C=(0,\ldots,0),
    \qquad
    C_i=C\text{ with the $i$-th entry replaced by }2.
  \]

  Let $z,z'\notin T$, let their type vectors be $\alpha,\beta$, and set $\delta=w(zz')$.
  For distinct $i,j\in T$, the three $4$-cycles on $\{z,z',i,j\}$ imply
  \begin{equation}
    \alpha_i+\alpha_j+\beta_i+\beta_j\ne0, \text{ }
    \delta+\beta_i+1+\alpha_j \ne0 \text{ and }
    \delta+\beta_j+1+\alpha_i \ne0.
    \label{eq:c4-sixset-compatibility}
  \end{equation}
  Among pairs of non-$A$ types, the first inequality in \eqref{eq:c4-sixset-compatibility} leaves only a pair $A_p,C_p$ with the same index $p\in T$.
  Indeed, two types chosen from $C,C_1,\ldots,C_6$ have value $0$ in at least two common coordinates and therefore violate this inequality.
  The pair $A_p,C$ violates it at the coordinates $p$ and any $j\ne p$.
  The pair $A_p,A_q$ violates it at $p,q$ when $p\ne q$, and at $p$ and any $j\ne p$ when $p=q$.
  The pair $A_p,C_q$ with $p\ne q$ violates it at $p$ and any $k\notin\{p,q\}$.
  For the remaining pair $A_p,C_p$, however, one has $\alpha_t+\beta_t=1$ for every $t\in T$, so the first inequality is satisfied.

  Suppose that $z,z'$ have types $A_p,C_p$, respectively, and choose distinct $j,k\in T\setminus\{p\}$.
  Applying the last two inequalities in \eqref{eq:c4-sixset-compatibility} to $j,k$ gives $\delta+2\ne0$, and hence $\delta\ne1$.
  Applying them to $p,j$ gives $\delta+1\ne0$ and $\delta\ne0$, and hence $\delta\ne2,0$.
  Thus, the pair $A_p,C_p$ is also impossible, so among the three vertices outside $T$, at most one is not of type $A$.

  If all three external vertices have type $A$, every edge between them has weight $1$ or $2$.
  Two weight-$2$ edges would share a vertex and, together with any vertex of $T$, form a zero-sum $4$-cycle.
  Thus, two external vertices are joined by an edge of weight $1$, and adjoining them to $T$ gives a constant $K_8$.

  Finally, suppose that $z$ is the unique non-$A$ vertex and that $x,y$ have type $A$.
  The edge $xy$ has weight $1$ or $2$.
  Assume that $w(xy)=2$.
  Equation \eqref{eq:c4-sixset-compatibility} gives $w(zx)=w(zy)=1$ when $z$ has type $A_i$, $w(zx),w(zy)\in\{0,2\}$ when $z$ has type $C$, and $w(zx)=w(zy)=0$ when $z$ has type $C_i$.

  If $z$ has type $A_i$, the $4$-cycle $zxyiz$ has weight $1+2+1+2=0$.
  If $z$ has type $C$, the $4$-cycles $zixyz$ and $ziyxz$ force $w(zy)=w(zx)=2$, after which the $4$-cycle $zxiyz$ has weight zero.
  If $z$ has type $C_i$, choose $j\ne i$; then the $4$-cycle $zjxyz$ has weight $0+1+2+0=0$.
  Each case is impossible, so $w(xy)=1$ and $T\cup\{x,y\}$ is a constant $K_8$.

  This proves the lemma.
\end{proof}

\begin{proof}[Proof of \cref{lem:k222}]
  Suppose that a weighting $w:E(K_{11})\to\Z_3$ contains no zero-sum copy of $K_{2,2,2}$.
  Fix two vertices $a,b$, let $W=V(K_{11})\setminus\{a,b\}$, and for every $z\in W$, define $\lambda(z)=w(az)+w(bz)$.
  On $K_W$, set
  \[
    \omega(uv)=w(uv)-\lambda(u)-\lambda(v).
  \]

  For a $4$-cycle $C=uxvyu$ in $K_W$, let $H_C$ denote the copy of $K_{2,2,2}$ with vertex classes $\{a,b\}$, $\{u,v\}$, and $\{x,y\}$.
  Since each vertex of $C$ lies on two cycle edges,
  \[
    \omega(C)
    =w(C)-2\sum_{z\in V(C)}\lambda(z)
    =w(C)+\sum_{z\in V(C)}\lambda(z)
    =w(H_C).
  \]
  Here $-2=1$ in $\Z_3$.
  Hence, $\omega$ has no zero-sum $4$-cycle.

  Since $|W|=9$, applying \cref{lem:c4-rigidity} to $\omega$ gives an eight-set $U\subseteq W$ and a residue $\varepsilon\in\Z_3^\times$ such that $\omega(uv)=\varepsilon$ for every distinct $u,v\in U$.
  By the definition of $\omega$,
  \[
    w(uv)=\varepsilon+\lambda(u)+\lambda(v)
  \]
  for every distinct $u,v\in U$.

  Apply \cref{thm:EGZ} to any five vertices of $U$ to obtain a triple $X\subseteq U$ with $\sum_{x\in X}\lambda(x)=0$.
  Since $|U\setminus X|\ge5$, a second application gives a disjoint triple $Y\subseteq U\setminus X$ with $\sum_{y\in Y}\lambda(y)=0$.
  Partition $X\cup Y$ into three pairs, and let $H$ be the resulting copy of $K_{2,2,2}$.
  Since $K_{2,2,2}$ is $4$-regular and has $12$ edges,
  \[
    w(H)
    =12\varepsilon+4\sum_{z\in X\cup Y}\lambda(z)
    =0,
  \]
  a contradiction.
\end{proof}


\begin{thebibliography}{99}

  \bibitem{AlonCaro1993}
  N.~Alon and Y.~Caro,
  On three zero-sum Ramsey-type problems,
  \emph{J. Graph Theory} \textbf{17} (1993), 177--192.

  \bibitem{AlvaradoColucciParente2025}
  J.~D. Alvarado, L.~Colucci, and R.~Parente,
  On a problem of Caro on $\Z_3$-Ramsey number of forests,
  arXiv:2503.01032, 2025.

  \bibitem{ChungGraham1983}
F.~R.~K. Chung and R.~L. Graham,
Edge-colored complete graphs with precisely colored subgraphs,
\emph{Combinatorica} \textbf{3} (1983), no.~3--4, 315--324.

  \bibitem{BialostockiDierker1990}
  A.~Bialostocki and P.~Dierker,
  On zero sum Ramsey numbers: small graphs,
  \emph{Ars Combin.} \textbf{29A} (1990), 193--198.

  \bibitem{BialostockiDierker1990theorems}
  A.~Bialostocki and P.~Dierker,
  Zero sum Ramsey theorems,
  \emph{Congr. Numer.} \textbf{70} (1990), 119--130.

  \bibitem{Caro1994}
  Y.~Caro,
  A linear upper bound in zero-sum Ramsey theory,
  \emph{Internat. J. Math. Math. Sci.} \textbf{17} (1994), no.~3, 609--612.

  \bibitem{Caro1994binary}
  Y.~Caro,
  A complete characterization of the zero-sum (mod 2) Ramsey numbers,
  \emph{J. Combin. Theory Ser. A} \textbf{68} (1994), 205--211.

  \bibitem{Caro1996}
  Y.~Caro,
  Zero-sum problems---a survey,
  \emph{Discrete Math.} \textbf{152} (1996), 93--113.

  \bibitem{Caro1997}
  Y.~Caro,
  Binomial coefficients and zero-sum Ramsey numbers,
  \emph{J. Combin. Theory Ser. A} \textbf{80} (1997), 367--373.

  \bibitem{CaroPersonalCommunication2026}
  Y.~Caro,
  Personal communication, 5 August 2026.

  \bibitem{CaroMifsud2025}
  Y.~Caro and X.~Mifsud,
  On zero-sum Ramsey numbers modulo 3,
  arXiv:2502.03864, 2025.

  \bibitem{ChiHeCyclesWheels}
  C.~Chi and J.~He,
  On zero-sum Ramsey numbers of cycles and wheels,
  arXiv:2605.14954, 2026.

  \bibitem{ChiHeCompleteBipartite2026}
  C.~Chi and J.~He,
  On zero-sum Ramsey numbers of complete bipartite graphs,
  arXiv:2606.29216, 2026.

  \bibitem{ChiHeMaCompleteGraphs2026}
  C.~Chi, J.~He, and F.~Ma,
  A note on zero-sum Ramsey numbers of complete graphs,
  arXiv:2607.23954, 2026.

  \bibitem{CostaFriendship2026}
  S.~Costa,
  Exact zero-sum Ramsey numbers for friendship graphs, books, and triangular windmills,
  Zenodo, 2026,
  \url{https://doi.org/10.5281/zenodo.21896624}.

  \bibitem{CostaBoundedTreewidth2026}
  S.~Costa,
  Zero-sum Ramsey numbers of bounded-treewidth graphs,
  Zenodo, 2026,
  \url{https://doi.org/10.5281/zenodo.21896927}.

  \bibitem{ErdosGinzburgZiv1961}
  P.~Erd\H{o}s, A.~Ginzburg, and A.~Ziv,
  Theorem in additive number theory,
  \emph{Bull. Res. Council Israel} \textbf{10F} (1961), 41--43.

  \bibitem{Golumbic1978}
  M.~C. Golumbic,
  Trivially perfect graphs,
  \emph{Discrete Math.} \textbf{24} (1978), 105--107.

  \bibitem{HarborthPiepmeyer1994}
  H.~Harborth and L.~Piepmeyer,
  The zero-sum Ramsey numbers $R(K_4,\Z_3)$ and $R(K_6,\Z_3)$,
  \emph{Congr. Numer.} \textbf{101} (1994), 51--54.

  \bibitem{HarborthPiepmeyer1996}
  H.~Harborth and L.~Piepmeyer,
  Zero-sum Ramsey numbers modulo 3,
  \emph{J. Combin. Theory Ser. A} \textbf{75} (1996), 145--147.

  \bibitem{HeathSimmons2026}
  E.~Heath and A.~Simmons,
  A linear upper bound on the $\Z_p$-Ramsey number of graphs with
  sufficiently large $2$-packing,
  arXiv:2605.21817, 2026.

  \bibitem{KatzLianMalekshahianShapiro2025}
  J.~Katz, X.~Lian, A.~Malekshahian, and A.~Shapiro,
  A linear upper bound for zero-sum Ramsey numbers of bounded degree
  graphs,
  arXiv:2512.17790, 2025.

  \bibitem{Shapiro2026}
  A.~Shapiro,
  A linear upper bound on zero-sum Ramsey numbers of $d$-degenerate
  graphs in $\Z_p$,
  arXiv:2604.10864, 2026.
\end{thebibliography}
\end{document}